\newcommand{\wt}{\widetilde}
\newcommand{\soc}{{\rm Soc}}
\newcommand{\Aut}{{\rm Aut}}
\newcommand{\Sym}{{\rm Sym}}
\newcommand{\Syl}{{\rm Syl}}
\newtheorem{theorem}{\bf Theorem}
\newtheorem*{theoremA}{\bf Theorem A}
\newtheorem*{theoremB}{\bf Theorem B}
\newtheorem*{theoremC}{\bf Theorem C}
\newtheorem*{theoremD}{\bf Theorem}
\newtheorem{lemma}{\bf Lemma}[section]
\newtheorem{corollary}[lemma]{\bf Corollary}
\newtheorem{proposition}[lemma]{\bf Proposition}
\date{}
\title{On finite totally $2$-closed groups}
\author{Alireza Abdollahi\footnote{E-mail: a.abdollahi@math.ui.ac.ir} $^{a}$,
 Majid Arezoomand\footnote{E-mail: arezoomand@lar.ac.ir (Corresponding author)} $^{b}$, and Gareth Tracey\footnote{E-mail: g.tracey@bham.ac.uk}\footnote{The third author would like to thank the Engineering and Physical Sciences Research Council for their support via the grant EP/T017619/1.} $^{c}$\\ {\small\em $^a$ Department of Pure Mathematics, Faculty of Mathematics and Statistics,} \\
{\small\em University of Isfahan, Isfahan 81746-73441, Iran}\\
 {\small\em $^b$ University of Larestan, Larestan 74317-16137, Iran}\\
 {\small\em $^c$ School of Mathematics, University of Birmingham,}\\ {\small\em Edgbaston, Birmingham, B15 2TT, United Kingdom} }
\begin{document}
\maketitle

\begin{abstract}
An abstract group $G$ is called totally $2$-closed if $H=H^{(2),\Omega}$ for any set $\Omega$ with $G\cong H\leq\Sym(\Omega)$,
where $H^{(2),\Omega}$ is the largest subgroup of $\Sym(\Omega)$ whose orbits on $\Omega\times\Omega$ are the same orbits
of $H$. In this paper, we classify the finite soluble totally $2$-closed groups. We also prove that the Fitting subgroup of a totally $2$-closed group is a totally $2$-closed group. Finally, we prove that a finite insoluble totally $2$-closed group $G$ of minimal order with non-trivial Fitting subgroup has shape $Z\cdot X$, with $Z=Z(G)$ cyclic, and $X$ is a finite group with a unique minimal normal subgroup, which is nonabelian.

{\bf Keywords:} 2-closed permutation group, soluble group, Fitting subgroup.

{\bf MSC Classification:} 20B05, 20D10, 20D25.
\end{abstract}

\section{Introduction and results} 
Let $\Omega$ be a set and $G$ be a group with $G\leq\Sym(\Omega)$. Then $G$ acts naturally on
$\Omega\times\Omega$ by
$(\alpha_1,\alpha_2)^g=(\alpha_1^g,\alpha_2^g)$, where $g\in G$ and $\alpha_1,\alpha_2\in\Omega$.
The {\it $2$-closure} of $G$ on $\Omega$, denoted by $G^{(2),\Omega}$,
 is defined to be the subgroup of $\Sym(\Omega)$ leaving each orbit of $G$ on
 $\Omega\times\Omega$ fixed. By \cite[Theorem 5.6]{Wielandt}
\[G^{(2),\Omega}=\{\theta\in\Sym(\Omega)\mid \forall \alpha,\beta\in\Omega, \exists g\in G, \alpha^\theta=\alpha^g,\beta^\theta=\beta^g\}.\]
 Furthermore, $G^{(2),\Omega}$ contains $G$ and is the  largest subgroup
of $\Sym(\Omega)$ whose orbits on $\Omega\times\Omega$ are the same orbits of $G$ \cite[Definition 5.3 and Theorem 5.4]{Wielandt}. Indeed $G^{(2),\Omega}$ is the automorphism group of the set of all $2$-ary relations invariant with respect to the group $G\leq\Sym(\Omega)$. Also $G$ is $2$-closed on $\Omega$, i.e $G=G^{(2),\Omega}$ if and only if there exists a complete colored digraph $\Gamma$ with vertex set $\Omega$  such that $\Aut(\Gamma)=G$. The partition of the set of arcs of this graph induced by the coloring forms a set of relations which generates the Krasner algebra \cite[p. 15]{Faradzev}.

In 1969,
Wielandt initiated the study of $2$-closures of permutation groups to present a
unified treatment of finite
and infinite permutation groups, based on  invariant relations and invariant functions \cite{Wielandt}. After Wielandt's pioneering work, there was some progress on the subject achieved mostly in the case of primitive groups \cite{Liebeck0, Liebeck1, Onan, Praeger1, Praeger2, Xu} and the $2$-closure was used as a tool in studying the graph isomorphism problem \cite{Ponomarenko1,Ponomarenko2,PV}; the isomorphism problem for Schurian coherent configurations \cite{Faradzev,VC}; and in the study of automorphisms of vertex transitive graphs \cite{Dobson,xu2011,xu2015}. The latter of these led to the formulation of the Polycirculant conjecture \cite{Cam-Giu}, which remains open, and has garnered much recent attention \cite{AAS}.

Due to its widespread motivation, the $2$-closure has been studied extensively. In particular, an interesting open question asks how far $G^{(2),\Omega}$ can be from G. This question was answered in \cite{Liebeck1} in the case where $G$ is a primitive almost simple permutation group, but remains open in general. In this paper, we study those finite groups which have the property that $G^{(2),\Omega}=G$ for all faithful permutation representations $G\leq\Sym(\Omega)$. Motivated by the study of some combinatorial invariants in lattice theory, Monks proves in \cite{Monks} that a finite cyclic group satisfies this property. Derek Holt in his answer to a question  \cite{holt} proposed
by the second author in Mathoverflow  introduced a class of abstract groups
called totally $2$-closed groups consisting of all groups which are $2$-closed in all of their
faithful permutation representations. Significant progress on the question of classifying the finite totally $2$-closed groups was achieved in \cite{AIPT}. There, the finite totally $2$-closed groups with trivial Fitting subgroup were classified: 
\begin{theoremD}\cite[Theorem 1.2 and Corollary 1.3]{AIPT}
		Let $G$ be a non-trivial finite group with trivial Fitting subgroup. Then $G$ is totally $2$-closed if and only if each of the following holds:
		\begin{enumerate}[\upshape(1)]
			\item $G=T_1\times\hdots\times T_r$, where the $T_i$ are nonabelian finite simple groups and $r\le 5$; 
			\item $T_i\not\cong T_j$ for each $i\neq j$; and 
			\item One of the following holds:
			\begin{enumerate}[\upshape(i)]
				\item $T_i\in\{\mathrm{J}_1, \mathrm{J}_3, \mathrm{J}_4, \mathrm{Th}, \mathrm{Ly}\}$ for each $i\le r$; or
				\item $T_i\in \{\mathrm{J}_1, \mathrm{J}_3, \mathrm{J}_4,\mathrm{Ly}, \mathbb{M}\}$ for each $i\le r$.
			\end{enumerate}
		\end{enumerate}
\end{theoremD}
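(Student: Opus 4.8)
The plan is to prove the equivalence in two stages: a structural reduction showing that a finite totally $2$-closed group $G$ with $F(G)=1$ must be a direct product of pairwise non-isomorphic non-abelian simple groups \emph{with no outer automorphisms attached}, followed by an identification of exactly which such products occur, which reduces to a question about single simple groups answered via the classification of finite simple groups (CFSG). For the reduction, I would start from the fact that $F(G)=1$ gives $F^*(G)=E(G)$ with $Z(E(G))\le F(G)=1$, so $E(G)=T_1\times\cdots\times T_r$ is a direct product of non-abelian simple groups, and $C_G(E(G))\le F^*(G)$ forces $C_G(E(G))=Z(E(G))=1$; hence $\soc(G)=E(G)$ and $E(G)\trianglelefteq G\le\Aut(E(G))$. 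I would then show, by contraposition, that if some $T_i\cong T_j$ or $G$ induces a non-trivial outer automorphism on some factor, there is a faithful permutation representation $G\le\Sym(\Omega)$ — built as a disjoint union of coset spaces $G/H$ with $\mathrm{core}_G(H)=1$, with the regular representation adjoined if faithfulness requires it — in which $G^{(2),\Omega}\gneq G$. The engine here is a decomposition lemma expressing the $2$-closure of a disjoint union of transitive $G$-sets in terms of the $2$-closures of the individual orbit actions and of the ``mixed'' relations between pairs of orbits; an isomorphism between factors, or a genuine outer automorphism, yields some such $H$ whose coset action has a strictly larger overgroup of wreath-product or holomorph type with identical orbitals.

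With $G=T_1\times\cdots\times T_r$ and the $T_i$ pairwise non-isomorphic, the next step is to reduce ``$G$ is totally $2$-closed'' — now an honest \emph{iff} — to a condition on the individual factors. Here I would combine Goursat's lemma (which, modulo common composition factors, constrains core-free subgroups of $G$ to be direct products $H_1\times\cdots\times H_r$), the decomposition lemma above, and the known multiplicativity of the $2$-closure under direct products of transitive actions, to obtain: $G$ is totally $2$-closed if and only if each $T_i$ is totally $2$-closed \emph{and} no pair $(T_i,T_j)$ carries the obstruction coming from an embedding $T_i\hookrightarrow T_j$ (or a suitable common section) that produces an orbital-preserving overgroup in the associated product action. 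Once the simple-group classification below is in place, the bound $r\le5$ and the requirement that all $T_i$ lie in a single list are immediate, since the only embedding among the admissible groups is $\mathrm{Th}\hookrightarrow\mathbb{M}$ and each of the two admissible families has exactly five members.

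It remains to classify the non-abelian simple groups $T$ that are totally $2$-closed, i.e.\ with $T^{(2),T/H}=T$ for every $H<T$. As $T\le T^{(2),T/H}$ with equal orbitals, a failure yields an overgroup $T\lneq K\le\Sym(T/H)$ with the same suborbits, which is an instance of the inclusion problem for transitive (largely primitive) permutation groups. Using CFSG, the O'Nan--Scott theorem, the classification of such inclusions, and results on permutation groups of small base and rank, I would work through the simple groups: for the alternating groups, the groups of Lie type, and every sporadic group other than $J_1,J_3,J_4,\mathrm{Th},\mathrm{Ly},\mathbb{M}$, I would exhibit an explicit $H$ together with a genuine overgroup $K$ sharing its orbitals (typically a symmetric group in its natural action, a classical group acting on subspaces, or an almost simple primitive overgroup), so these $T$ are not totally $2$-closed; and for the six remaining groups I would verify, using their maximal-subgroup structure and permutation-character data (largely recorded in the \textsc{Atlas} and accessible by computer algebra), that for no $H$ is there a larger group with the same orbitals. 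I expect the \emph{main obstacle} to be exactly this final verification — certifying, for each of $J_1,J_3,J_4,\mathrm{Th},\mathrm{Ly},\mathbb{M}$ and \emph{every} subgroup $H$, that the coset action admits no orbital-preserving overgroup, and in particular ruling out product-action and diagonal-type overgroups rather than merely primitive almost simple ones — together with making the disjoint-union decomposition lemma precise enough to control the mixed orbitals between distinct orbits in the earlier steps.
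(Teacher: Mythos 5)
First, a point of order: the paper does not prove this statement. It is quoted as background from \cite{AIPT} (Theorem 1.2 and Corollary 1.3 there), and the present paper's own results (Theorems A, B, C) concern the complementary situation $F(G)\neq 1$. So there is no in-paper proof to compare your attempt against; I can only measure your outline against the cited source. That said, your architecture --- (i) $F(G)=1$ forces $F^*(G)=E(G)=T_1\times\cdots\times T_r$ with trivial centre and $C_G(E(G))=1$, hence $\soc(G)\unlhd G\le\Aut(\soc(G))$, and one then kills repeated factors and outer automorphisms; (ii) a criterion reducing total $2$-closure of the product to total $2$-closure of the factors plus a ``no common section'' condition; (iii) a CFSG-driven determination of the totally $2$-closed simple groups --- does match the shape of the argument in \cite{AIPT}, and your explanation of condition (3) is the right one: $\mathrm{Th}$ is involved in $\mathbb{M}$, while $\mathrm{J}_1,\mathrm{J}_3,\mathrm{J}_4,\mathrm{Ly}$ are pariahs and the six groups otherwise do not involve one another, which is exactly why two five-element lists appear and why $r\le 5$.

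As a proof, however, the proposal has genuine gaps. The parenthetical appeal to Goursat's lemma is wrong as stated: a core-free subgroup of $T_1\times T_2$ with $T_1\not\cong T_2$ simple need not be a direct product (a ``diagonal'' copy of $A_5$ inside $A_5\times A_6$ is core-free and is not of the form $H_1\times H_2$), and ``modulo common composition factors'' cannot rescue this, since any two nontrivial finite groups already share the composition factor $C_2$; the genuine obstruction is an isomorphism between sections of the two projections, and identifying precisely when such subgroups produce a failure of $2$-closure is the content of the section condition in the theorem --- it must be proved, not assumed away. More seriously, the two steps you yourself flag as the ``main obstacle'' --- a precise decomposition of $G^{(2),\Omega}$ for intransitive $\Omega$ that controls the orbitals \emph{between} distinct orbits, and the verification that exactly $\mathrm{J}_1,\mathrm{J}_3,\mathrm{J}_4,\mathrm{Th},\mathrm{Ly},\mathbb{M}$ (and no alternating group, group of Lie type, or other sporadic group) admit an orbital-preserving overgroup in no faithful action whatsoever, imprimitive and intransitive actions included --- are left entirely as programs. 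Those two items are essentially the whole content of \cite{AIPT}; without them the proposal is an accurate table of contents for the proof rather than a proof.
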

In particular, there are precisely $47$ totally $2$-closed finite groups with trivial Fitting subgroup.
In this paper, we classify the finite soluble totally $2$-closed groups. Our main result reads as follows:
\begin{theoremA}
Let $G$ be a finite soluble group. Then $G$ is totally $2$-closed if and only if it is cyclic or a direct product of a cyclic
 group of odd order with a generalized quaternion group.
\end{theoremA}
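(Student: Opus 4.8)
The two directions call for rather different techniques, so I would treat them separately; for orientation, note that the groups in the statement are precisely the finite \emph{nilpotent} groups having a unique subgroup of order $p$ for every prime $p\mid|G|$ — equivalently, the nilpotent groups all of whose Sylow subgroups are cyclic or generalized quaternion. Three elementary facts about $2$-closures will be used throughout: (i) the regular representation of any group is $2$-closed, so the dangerous representations are those \emph{without} a regular orbit; (ii) if $\Omega=\Omega_1\sqcup\cdots\sqcup\Omega_r$ is the orbit decomposition of $G\le\Sym(\Omega)$, then $\theta\in G^{(2),\Omega}$ if and only if $\theta$ restricted to $\Omega_i\cup\Omega_j$ lies in $G^{(2),\Omega_i\cup\Omega_j}$ for every $i,j$ (``pairwise reduction'', valid because $2$-ary relations only involve pairs of points) — so, writing $\Omega_i=G/H_i$, the group $G^{(2),\Omega}$ is the group of compatible tuples $(\theta_i)$ with $\theta_i\in G^{(2),G/H_i}$, and when the $H_i$ are normal this is exactly the iterated fibre product of the $G/H_i$ over the $G/H_iH_j$; and (iii) total $2$-closedness passes to direct factors (adjoin a regular orbit of the complementary factor and apply (ii)).

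\emph{Sufficiency.} For $G=\mathbb{Z}/n$ acting on $\bigsqcup_i\mathbb{Z}/e_i$ with $\mathrm{lcm}_i(e_i)=n$, the pairwise reduction identifies $G^{(2),\Omega}$ with the tuples that are pairwise congruent modulo $\gcd(e_i,e_j)$, and a global Chinese Remainder argument shows this equals the image of $G$; this reproves Monks's result. For $G=C_m\times Q$ with $m$ odd and $Q$ generalized quaternion, the key structural input is that every nontrivial subgroup $H$ of $G$ has nontrivial core: for each prime $p\mid|G|$ the unique subgroup of order $p$ is central, hence normal, hence contained in $\mathrm{core}_G(H)$ whenever $H$ meets it. Combining this with the pairwise reduction, I would show for every faithful $\Omega=\bigsqcup_iG/H_i$ that $G^{(2),G/H_i}$ is already the image $G/\mathrm{core}_G(H_i)$ — that is, all transitive representations of $G$, and of the relevant quotients $C_{m'}\times(\text{dihedral or Klein four})$, are $2$-closed, the dihedral case being the classical fact that the automorphism group of a cycle is dihedral — and that the pairwise fibre products over the $G/(\mathrm{core}_G(H_i)\mathrm{core}_G(H_j))$ reassemble $G$ exactly, this being the group-theoretic content of the unique-minimal-subgroup property. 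The bookkeeping for the non-abelian $2$-part $Q$ and its dihedral quotients is the only delicate point and runs parallel to the cyclic case.

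\emph{Necessity.} Suppose $G$ is finite soluble and totally $2$-closed. The engine is two families of ``bad'' representations (neither has a regular orbit, which is why they can fail). First: if $G$ has normal subgroups $N_1,\dots,N_k$ ($k\ge3$) with $\bigcap_iN_i=1$ and a common pairwise join $W:=N_iN_j$ with $|W:N_i|=p$ for all $i\ne j$ — in practice $W\cong C_p\times C_p$ normal in $G$ with all $p+1$ of its subgroups of order $p$ normal in $G$ — then $\bigsqcup_iG/N_i$ is faithful but its $2$-closure, the pairwise fibre product, has order $|G|\,p^{\,k-2}>|G|$. Second, a ``factorisation lemma'': if $G=H_1H_2$ with $H_1\trianglelefteq G$, $\ H_1\cap\mathrm{core}_G(H_2)=1$ and $H_1\,\mathrm{core}_G(H_2)\neq G$, then $G$ is transitive on $(G/H_1)\times(G/H_2)$, so on $G/H_1\sqcup G/H_2$ there is no cross-constraint and $G^{(2),\Omega}$ contains the whole product $(G/H_1)\times G^{(2),G/H_2}$, which strictly exceeds the faithful image of $G$. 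Both facts are short consequences of the pairwise reduction and a count of orders.

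With these in hand the argument runs as follows. By the companion result of this paper, $F(G)$ is totally $2$-closed; being nilpotent it is the direct product of its Sylow subgroups, each therefore totally $2$-closed by (iii). If some Sylow subgroup $P$ were neither cyclic nor generalized quaternion it would contain $C_p\times C_p$: then either every subgroup of order $p$ of $P$ is normal, whence $P$ acts by scalars on a normal $C_p\times C_p$ and the first family applies, or $P$ has a non-normal subgroup of order $p$ outside $\Phi(P)$, whence the factorisation lemma applies with $H_2$ that subgroup and $H_1$ a maximal subgroup missing it; the remaining configuration (a non-normal subgroup of order $p$ inside $\Phi(P)$) is dealt with by descending induction, since then $\langle x\rangle^{P}$ is a proper normal non-cyclic non-generalized-quaternion subgroup of $P$. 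Hence $F(G)$ is cyclic or of the form $C_m\times Q$. It remains to prove $G=F(G)$. If $\soc(G)\not\le\Phi(G)$, choose a minimal normal subgroup $N\not\le\Phi(G)$ and a maximal subgroup $H$ with $N\not\le H$; then $G=NH$, $\ N\cap\mathrm{core}_G(H)=1$, and the factorisation lemma (with $H_1=N$, $H_2=H$) forces a contradiction unless $\mathrm{core}_G(H)=H$, i.e. $G=N\times H$, a case closed by induction on $H$ via (iii) (one checks along the way that a direct-factor minimal normal subgroup must have order $p$). The residual case $\soc(G)\le\Phi(G)$ is the genuine obstacle: here I would pass to $G/\Phi(G)$, proving that total $2$-closedness survives quotienting by the Frattini subgroup — restoring faithfulness by adjoining carefully chosen small orbits rather than a regular one — and thereby reduce to the previous case. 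Combining ``$F(G)$ cyclic or $C_m\times Q$'' with ``$G=F(G)$'' gives the classification.
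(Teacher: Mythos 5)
Your necessity argument has a genuine gap at the step you yourself flag: the case $\soc(G)\le\Phi(G)$. There you need the assertion that total $2$-closedness is inherited by $G/\Phi(G)$, and this is neither proved nor plausibly provable by the method you sketch. A faithful permutation representation of $G/\Phi(G)$ does not lift to a faithful representation of $G$; to restore faithfulness you must adjoin orbits on which $\Phi(G)$ acts nontrivially, and the $2$-closure of the enlarged $G$-set is then cut out by the \emph{extra} relations coming from those orbits, so the fact that $G$ is $2$-closed on the big set gives no upper bound whatsoever on the $2$-closure of $G/\Phi(G)$ on the original set. Whether total $2$-closedness passes to quotients is not addressed anywhere in this circle of results and appears genuinely hard. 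Nor is the residual case degenerate: it contains $C_{p^2}$, $Q_8$, the extraspecial groups and every non-split configuration, i.e.\ exactly the groups your earlier cases do not reach. (The sufficiency half is also only a sketch --- ``the bookkeeping\ldots is the only delicate point'' --- though that half could legitimately be discharged by citing the known classification of finite nilpotent totally $2$-closed groups, as the paper does.)

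The gap is avoidable, and the paper's route shows how: your ``factorisation lemma'' is true \emph{without} the hypothesis that $H_1$ be normal. If $G=H_1H_2$ with $H_1,H_2$ proper and $(H_1)_G\cap (H_2)_G=1$, then $G=H_1^xH_2^y$ for all $x,y$, so Wielandt's Dissection Theorem applied to $\Omega=G/H_1\sqcup G/H_2$ gives $G/(H_1)_G\times G/(H_2)_G\le G^{(2),\Omega}=G$, forcing $G=(H_1)_G\times (H_2)_G$. The paper applies this with $H_1$ a Hall $p'$-subgroup and $H_2$ a Sylow $p$-subgroup of the soluble group $G$ --- neither of which is normal a priori, which is precisely where your normality restriction on $H_1$ blocks you --- and concludes in one stroke that every Sylow subgroup is normal, so $G$ is nilpotent and the nilpotent classification finishes the proof. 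No Fitting-subgroup, socle or Frattini analysis is needed; your correct observations (the dissection/fibre-product mechanism, the $C_p\times C_p$ obstruction, heredity to direct factors) all survive, but the Hall-subgroup factorisation is the missing idea that collapses the case analysis and, in particular, eliminates the problematic $\soc(G)\le\Phi(G)$ branch entirely.
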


This extends the main theorem in \cite{arezoomand}, which classifies the finite nilpotent totally $2$-closed groups. As the reader can see, all the groups in Theorem A are nilpotent, so we deduce that there are no non-nilpotent soluble totally $2$-closed groups. 

As a by-product of our methods, we obtain a structural theorem for general finite totally $2$-closed groups: We show that the Fitting subgroup of such a group is also totally $2$-closed:
\begin{theoremB}
 Let $G$ be a totally $2$-closed finite group. Then $F(G)$ is totally $2$-closed. In particular, 
 $F(G)$ is cyclic or a direct product of a generalized quaternion group
 with a cyclic group of odd order.
\end{theoremB}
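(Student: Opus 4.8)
The displayed "in particular" clause is immediate once the first statement is proved: $F(G)$ is nilpotent, so by the classification of the finite nilpotent totally $2$-closed groups in \cite{arezoomand} it is cyclic or a direct product of a generalized quaternion group with a cyclic group of odd order. So I will concentrate on proving that $F:=F(G)$ is totally $2$-closed whenever $G$ is, arguing by contraposition.

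The engine of the argument is the following soft observation, stated for an arbitrary normal subgroup $N$ of $G$: if $\Delta$ is a set on which $G$ acts faithfully and $\theta\in\Sym(\Delta)$ preserves every orbit of $N$ on $\Delta\times\Delta$ (equivalently $\theta\in N^{(2),\Delta}$) but $\theta\notin G$, then $G$ is not totally $2$-closed. Indeed every $G$-orbit on $\Delta\times\Delta$ is a union of $N$-orbits, so $\theta$ preserves every $G$-orbit, whence $\theta\in G^{(2),\Delta}\setminus G$. The whole problem is thereby reduced to exhibiting, for $N=O_r(G)$ with a suitable prime $r$, a faithful $G$-set $\Delta$ together with such a witness $\theta$. (I will also use the easy auxiliary fact, obtained by adjoining a regular $G$-orbit, that a direct factor of a totally $2$-closed group is totally $2$-closed.)

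Now suppose $F$ is not totally $2$-closed. If every $O_r(G)$ were cyclic then $F$, being their direct product, would be cyclic, hence totally $2$-closed by Monks; and if the only noncyclic one were generalized quaternion then $F$ would be of the second type in the statement, again totally $2$-closed by \cite{arezoomand}. Hence there is a prime $r$ with $P:=O_r(G)$ noncyclic and not generalized quaternion, so $P/\Phi(P)$ is elementary abelian of rank $d\geq 2$. Consider first the case where $G$ actually has a normal elementary abelian $r$-subgroup $E$ of rank $\geq 2$ (for instance $E=\Omega_1(Z(P))$ when it has rank $\geq 2$; and always, with $E=P$, when $\Phi(P)=1$). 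Let $\Delta:=\bigsqcup_{W}G/W$, where $W$ runs over the index-$r$ subgroups of $E$, viewed as subgroups of $G$. Since $\bigcap_W W=1$ we get $\bigcap_W\mathrm{Core}_G(W)=1$, so $\Delta$ is a faithful $G$-set; and since $E\trianglelefteq G$ with each $W\leq E$, every $E$-orbit of $\Delta$ has size $r$, $E$ acting on it through some quotient $E\to E/W'\cong C_r$, and an element $g\in G$ fixes such an orbit setwise only if $g\in E$. Fix an index-$r$ subgroup $W_0$ and an element $e_0\in E\setminus W_0$, and let $\theta\in\Sym(\Delta)$ act as $e_0$ does on every $E$-orbit of type $W_0$ and trivially on all other points. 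A routine orbitwise check (the pairs whose two points lie in $E$-orbits of distinct types already form single $E$-orbits) gives $\theta\in E^{(2),\Delta}$. On the other hand $\theta$ fixes every $E$-orbit of $\Delta$ setwise but is not an element of $E$, since an element of $E$ acting trivially on all orbits of type $\neq W_0$ would lie in $\bigcap_{W\neq W_0}W=1$ (here $\operatorname{rank}E\geq 2$ is used). Therefore $\theta\notin G$, and $\theta$ is the required witness, with $N=E$.

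The remaining case — when no prime yields a normal elementary abelian subgroup of rank $\geq 2$, so in particular $\Phi(P)\neq 1$ — is where the real difficulty lies, and I expect it to be the main obstacle. The naive fix, replacing the $W$'s above by the preimages in $P$ of the index-$r$ subgroups of $P/\Phi(P)$, still enlarges the $2$-closure via the bad representation of $P/\Phi(P)\cong C_r^d$, but now $\Phi(P)$ (which by Gaschütz's theorem lies in $\Phi(G)$) sits in the kernel, so $\Delta$ is no longer faithful; and adjoining a faithful orbit $G/K$ tends to kill the extra symmetry unless $P\leq K$, which puts $\Phi(P)$ back into the kernel. I would resolve this by induction on $|G|$, passing to $G/\Phi(P)$: its Fitting subgroup is still not totally $2$-closed, because $P/\Phi(P)$ is noncyclic and not generalized quaternion, so by minimality $G/\Phi(P)$ is not totally $2$-closed; one then has to lift a bad faithful representation of $G/\Phi(P)$ to a bad faithful representation of $G$, using $\Phi(P)\leq\Phi(G)$ to control the cores that arise. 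Making this lifting work — in effect, producing an auxiliary faithful $G$-orbit on which the witness $\theta$ acts trivially while $P$ still maps onto the relevant quotient — is the crux; the rest is routine manipulation of $2$-closures of disjoint unions and of imprimitive permutation groups.
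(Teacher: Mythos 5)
Your reduction is sound: the monotonicity observation $N^{(2),\Delta}\leq G^{(2),\Delta}$ for $N\leq G$ is correct, and your Case~1 construction (the disjoint union of the coset spaces $G/W$ over the hyperplanes $W$ of a $G$-normal elementary abelian $E$ of rank $\geq 2$, with $\theta$ acting as $e_0$ on the orbits whose kernel is $W_0$) does survive the orbitwise check you indicate, the key point being that two point-orbits with distinct kernels $W_0\neq W'$ satisfy $W_0W'=E$, so the corresponding pair-orbits are full products. But the proof is not complete, and the gap is exactly where you say it is: the case in which no prime $r$ admits a $G$-normal elementary abelian $r$-subgroup of rank $\geq 2$ --- which includes, for instance, $O_r(G)$ dihedral, semidihedral or extraspecial, and more generally any noncyclic $P=O_r(G)$ with $Z(P)$ cyclic and $\Phi(P)\neq 1$ --- is left open. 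The proposed repair by induction through $G/\Phi(P)$ founders on the step you yourself flag as the crux: you would need to show that if $G/\Phi(P)$ is not totally $2$-closed then neither is $G$, i.e., that a bad faithful representation of the quotient lifts to a bad faithful representation of $G$. No such lifting principle is established (it is not known, even for Frattini-contained kernels, that quotients of totally $2$-closed groups are totally $2$-closed), and this missing step is essentially as hard as the theorem itself.

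The paper's proof runs in the opposite direction and never constructs a bad representation. Given an \emph{arbitrary} faithful $F$-set $\Delta$, the Universal Embedding Theorem yields a faithful $G$-set $\Omega=\Delta\times G/F$ whose restriction to $F$ is, by a Mackey double-coset computation, a disjoint union of $[G:F]$ copies of $(F,\Delta)$. Since the $2$-closure of a nilpotent permutation group is nilpotent (Corollary \ref{nilp2}), Lemma \ref{Second} applied with $\mathcal{P}=$ nilpotency gives $F^{(2),\Omega}=F$, and a short comparison of $F^{(2),\Delta}$ acting diagonally on the copies with $F$ itself then forces $F^{(2),\Delta}=F$. The real content you are missing is an analogue of Corollary \ref{nilp2}: the fact that nilpotency passes to $2$-closures is what controls $F$ in \emph{every} faithful representation of $G$ simultaneously, instead of requiring you to hunt for a single witness.
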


Finally, with these results, and the results in \cite{AIPT} in mind, we were interested in obtaining structural information on a finite insoluble totally $2$-closed group with non-trivial Fitting subgroup, of minimal order (such a group may not exist, of course). Our final theorem reads as follows:
\begin{theoremC}
A finite insoluble totally $2$-closed group $G$ of minimal order with non-trivial Fitting subgroup has shape $Z.X$, where $Z=Z(G)$ is cyclic, and $X$ has a unique minimal normal subgroup, which is nonabelian.
\end{theoremC}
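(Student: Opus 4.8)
\emph{Sketch of proof.} The plan is to combine Theorem B with a minimal-counterexample analysis of the generalized Fitting structure of $G$, reducing in each surviving case to the total $2$-closedness of a suitable proper normal subgroup of $G$.

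The centre is the easy part. Since $Z:=Z(G)$ is abelian and normal it lies in $F(G)$, and by Theorem B the group $F(G)$ is cyclic or a direct product $Q\times C$ of a generalized quaternion group $Q$ with a cyclic group $C$ of odd order; in either case $Z(F(G))$ is cyclic, so $Z\le Z(F(G))$ is cyclic. It then remains to prove that $X:=G/Z$ has a unique minimal normal subgroup and that it is nonabelian. As a first reduction I would show $E(G)\ne 1$: if $C_{G}(F(G))\le F(G)$ then $F^{*}(G)=F(G)$, so $G/Z(F(G))$ embeds in $\Aut(F(G))$, which is soluble --- abelian when $F(G)$ is cyclic, and $\Aut(Q)\times\Aut(C)$ otherwise, with $\Aut(C)$ abelian and $\Aut(Q)$ either $\Sym(4)$ (if $|Q|=8$) or a $2$-group (if $|Q|\ge16$) --- forcing $G$ soluble, a contradiction. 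Here I also record that every subgroup of $F(G)$ is cyclic or a direct product of a cyclic or generalized quaternion group with a cyclic group of odd order, so any normal subgroup of $G$ contained in $F(G)$ has soluble automorphism group.

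Next I would rule out, for the minimal $G$, any abelian minimal normal subgroup of $X$ and any two distinct minimal normal subgroups of $X$. If $M/Z$ is an abelian minimal normal subgroup of $X$, then $[M,M]\le Z\le Z(M)$, so $M$ is nilpotent, $M\le F(G)$, hence $F(G)\supsetneq Z$; then $G$ does not centralise $F(G)$, so $K:=C_{G}(F(G))$ is proper, $K$ is insoluble (as $G/K$ embeds in the soluble group $\Aut(F(G))$), and $F(K)\supseteq Z(F(G))\ne 1$. Similarly, having excluded the abelian case, two distinct minimal normal subgroups of $X$ would force $G$ to act intransitively on its components; letting $N$ be the product of the components in one orbit, the subgroup $K:=C_{G}(N)$ is proper (since $N\not\le Z(G)$), insoluble (it contains the components in the remaining orbits), and satisfies $F(K)\supseteq F(G)\ne 1$ because $F(G)\le C_{G}(E(G))$. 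In both situations $K$ is a \emph{proper} insoluble group with non-trivial Fitting subgroup, so if $K$ is totally $2$-closed the minimality of $G$ is violated. Granting this, $X=G/Z$ has a unique minimal normal subgroup, which is nonabelian, and with the cyclicity of $Z$ this is exactly the asserted shape $Z.X$.

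The substance of the argument is precisely the proviso just made: total $2$-closedness is not inherited by arbitrary subgroups or quotients, so the minimality of $G$ cannot be applied until one knows that the particular (normal) subgroups $K=C_{G}(F(G))$ and $K=C_{G}(N)$ above are totally $2$-closed. The hard part will therefore be either to prove a hereditary statement tailored to these subgroups --- analogous to the statement that $F(G)$ itself is totally $2$-closed, which is Theorem B --- or, where such a statement fails, to convert a hypothetical extra minimal normal subgroup of $G/Z$ directly into an explicit faithful permutation representation $G\le\Sym(\Omega)$ with $G^{(2),\Omega}\supsetneq G$, in the style of the constructions used to prove Theorem A and of those in \cite{AIPT} for the Fitting-trivial case.
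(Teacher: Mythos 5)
Your skeleton is essentially the paper's: show $Z=Z(G)$ is cyclic, reduce to $F(G)=Z(G)$, and show that any hypothetical abelian or second minimal normal subgroup of $X=G/Z$ produces a proper normal subgroup $K$ of $G$ (namely $C_G(F(G))$, or the centraliser of the product of the components in one $G$-orbit) which is insoluble and has non-trivial Fitting subgroup, contradicting minimality \emph{provided} $K$ is totally $2$-closed. But that proviso, which you explicitly flag and defer, is the load-bearing step: without it the argument is a reduction, not a proof. The paper supplies exactly the hereditary statement you ask for as Proposition \ref{CTheorem}: if $G$ is totally $2$-closed and $N\unlhd G$, then $C_G(N)$ is totally $2$-closed. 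Its proof follows the first of your two suggested routes. One takes a faithful action of $C:=C_G(N)$ on a set $\Delta$, uses the Universal Embedding Theorem (Theorem \ref{universal}) to extend it to a faithful action of $G$ on $\Omega:=\Delta\times G/C$, observes that
$C^{(2),\Omega}=(C_G(N))^{(2),\Omega}\le C_{G^{(2),\Omega}}(N^{(2),\Omega})\le C_G(N)=C$
by Lemma \ref{Lemma2.1arezoomand} together with $G^{(2),\Omega}=G$, and then transfers $2$-closedness from $\Omega$ down to $\Delta$ by the ``diagonal'' argument in the proof of Proposition \ref{FTheorem}: the restriction of this action to $C$ is a disjoint union of $[G:C]$ conjugated copies of $(C,\Delta)$, and an element of $C^{(2),\Delta}$ acting compatibly on all copies is shown to lie in $C^{(2),\Omega}=C$, forcing $C^{(2),\Delta}=C$. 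Until this proposition (or a substitute) is proved, your appeals to minimality cannot be launched, so the proposal has a genuine gap even though it correctly diagnoses where the difficulty lies.

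Two smaller points. First, your uniqueness step tacitly assumes that every nonabelian minimal normal subgroup of $G/Z$ is the image of a product of components of $G$; this is true but requires the lifting argument the paper gives (choose $R_i\le S_i$ minimal with $R_iZ=S_i$, check $Z\cap R_i\le\Phi(R_i)$ so that $R_i$ is quasisimple and subnormal, hence $R_i\le E(G)$). Second, your treatment of the case $F(G)\neq Z(G)$ differs slightly from the paper's: the paper uses minimality to conclude that $C_G(F)$ is \emph{soluble}, hence nilpotent by Theorem A, hence contained in $F$, and then derives a contradiction from $F=F^*(G)$ and the solubility of $\Aut(F)$; your more direct version (that $C_G(F)$ is insoluble because $G/C_G(F)$ embeds in the soluble group $\Aut(F(G))$) is equally valid once Proposition \ref{CTheorem} is available. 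Both variants lean on the same unproved lemma.
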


\subsection{Preliminary results and notations}
 In this section we collect some basic and elementary results and notations we need later. Our notations are standard and are mainly taken from \cite{Dixon}, but for
the reader's convenience  we recall some of them as  follows:
\begin{itemize}
\item[] $\Sym(\Omega)$: The symmetric group on the set $\Omega$.
\item[] $\alpha^g$: The action of $g$ on $\alpha$.
\item[] $G_\alpha$: The point stabilizer of $\alpha$ in $G$.
\item[] $\alpha^G$: The orbit of $\alpha$ under $G$.
\item[] $Z(G)$: The center of $G$.
\item[] $F(G)$: The fitting subgroup of $G$.
\item[] $C_G(H)$: The centralizer of the  subgroup $H$ of a group $G$.
\item[] $O_p(G)$: The intersection of all Sylow $p$-subgroups of $G$.
\item[] $\soc(G)$: The socle of a group $G$.
\item[] $H_G$: The core of the  subgroup $H$ of $G$, that is the intersection of all $G$-conjugates of $H$.
\end{itemize}
 
 \begin{lemma}{\rm (\cite[Lemma 2.1]{arezoomand})}\label{Lemma2.1arezoomand}
Let $G\leq\Sym(\Omega)$, $A,B\leq G$ and $[A,B]=1$. Then $[A^{(2),\Omega},B^{(2),\Omega}]=1$. In particular, if $G$
is abelian then $G^{(2),\Omega}$ is also abelian. Furthermore,
if $H\leq G$ then $(C_G(H))^{(2),\Omega}\leq C_{G^{(2),\Omega}}(H^{(2),\Omega})$.
\end{lemma}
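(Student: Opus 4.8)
The plan is to derive everything from the explicit pointwise description of the $2$-closure recalled in the introduction, namely that $\theta\in G^{(2),\Omega}$ precisely when for every pair $\alpha,\beta\in\Omega$ there is some $g\in G$ with $\alpha^\theta=\alpha^g$ and $\beta^\theta=\beta^g$. The three assertions are not independent: the second is the case $A=B=G$ of the first, and the third follows from the first together with the monotonicity of the $2$-closure. So the heart of the matter is the first claim, $[A^{(2),\Omega},B^{(2),\Omega}]=1$ whenever $[A,B]=1$.

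For that I would argue pointwise with a ``double pair'' trick. Fix $a\in A^{(2),\Omega}$, $b\in B^{(2),\Omega}$ and a point $\alpha\in\Omega$; the goal is $\alpha^{ab}=\alpha^{ba}$ (using the convention $\alpha^{ab}=(\alpha^a)^b$). Applying the pair description of $B^{(2),\Omega}$ to the pair $(\alpha,\alpha^a)$ produces $y\in B$ with $\alpha^b=\alpha^y$ and $(\alpha^a)^b=(\alpha^a)^y$; applying the pair description of $A^{(2),\Omega}$ to the pair $(\alpha,\alpha^b)$ produces $x\in A$ with $\alpha^a=\alpha^x$ and $(\alpha^b)^a=(\alpha^b)^x$. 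Chaining these equalities gives
\[
\alpha^{ab}=(\alpha^a)^b=(\alpha^a)^y=(\alpha^x)^y=\alpha^{xy}, \qquad \alpha^{ba}=(\alpha^b)^a=(\alpha^b)^x=(\alpha^y)^x=\alpha^{yx}.
\]
Since $x\in A$ and $y\in B$ commute by hypothesis, $\alpha^{xy}=\alpha^{yx}$, whence $\alpha^{ab}=\alpha^{ba}$. As $\alpha$, $a$ and $b$ were arbitrary this yields $[A^{(2),\Omega},B^{(2),\Omega}]=1$.

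The remaining parts are then quick. Taking $A=B=G$ with $G$ abelian gives $[G^{(2),\Omega},G^{(2),\Omega}]=1$, so $G^{(2),\Omega}$ is abelian. For the centralizer statement, note $[C_G(H),H]=1$, so the first part already gives $[(C_G(H))^{(2),\Omega},H^{(2),\Omega}]=1$, i.e.\ every element of $(C_G(H))^{(2),\Omega}$ centralizes $H^{(2),\Omega}$ inside $\Sym(\Omega)$. To land inside $G^{(2),\Omega}$ rather than merely $\Sym(\Omega)$, I would invoke monotonicity of the $2$-closure: since $C_G(H)\leq G$, the $C_G(H)$-orbits on $\Omega\times\Omega$ refine the $G$-orbits, so any permutation preserving all the former preserves all the latter, giving $(C_G(H))^{(2),\Omega}\leq G^{(2),\Omega}$. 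Intersecting the two containments yields $(C_G(H))^{(2),\Omega}\leq C_{G^{(2),\Omega}}(H^{(2),\Omega})$.

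The one place requiring care is the bookkeeping in the double-pair step: one must feed the correct ordered pair into each $2$-closure (the pair for $b$ uses $\alpha^a$ in its second slot, the pair for $a$ uses $\alpha^b$), and keep the action convention $\alpha^{gh}=(\alpha^g)^h$ straight, so that the two witnesses $x,y$ slot together into $xy$ and $yx$. Beyond this the argument is entirely formal; the only external fact used is the monotonicity of the $2$-closure under subgroup inclusion, which follows directly from the orbit-refinement description.
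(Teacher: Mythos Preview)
Your argument is correct. The paper itself does not prove this lemma; it is quoted as a preliminary result from \cite[Lemma~2.1]{arezoomand}, so there is no proof in the present paper to compare against. That said, your ``double-pair'' computation is the standard way to establish the first assertion, and your derivation of the remaining two claims from it via the special case $A=B=G$ and via monotonicity of the $2$-closure is exactly how one expects the original proof to proceed.
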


\begin{lemma}{\rm (\cite[Lemma 2.2]{arezoomand})}\label{Lemma2.2arezoomand}
Let $G\leq\Sym(\Omega)$ and $H\leq\Sym(\Gamma)$ be permutation isomorphic. Then
$G^{(2),\Omega}\leq\Sym(\Omega)$ and $H^{(2),\Gamma}\leq\Sym(\Gamma)$ are permutation isomorphic.
\end{lemma}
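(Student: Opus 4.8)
The plan is to exhibit the very data that witnesses the permutation isomorphism of $G$ and $H$ as a witness for the permutation isomorphism of their $2$-closures. Recall that $G\leq\Sym(\Omega)$ and $H\leq\Sym(\Gamma)$ being permutation isomorphic means that there are a bijection $\varphi\colon\Omega\to\Gamma$ together with a group isomorphism $\psi\colon G\to H$ such that $\varphi(\alpha^g)=\varphi(\alpha)^{\psi(g)}$ for all $\alpha\in\Omega$ and $g\in G$. First I would set $\psi$ aside and use $\varphi$ alone to build the conjugation isomorphism $\Phi\colon\Sym(\Omega)\to\Sym(\Gamma)$ determined by $\varphi(\alpha^\theta)=\varphi(\alpha)^{\Phi(\theta)}$ for all $\alpha\in\Omega$ and $\theta\in\Sym(\Omega)$. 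This $\Phi$ is a group isomorphism, and $(\varphi,\Phi)$ is by construction a permutation isomorphism between the two full symmetric groups. Putting $\theta=g$ and using that $\varphi$ is a bijection shows that $\Phi$ restricts to $\psi$ on $G$, so in particular $\Phi(G)=H$.

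The heart of the argument is then to prove that $\Phi$ carries $G^{(2),\Omega}$ onto $H^{(2),\Gamma}$. For this I would invoke the pointwise description of the $2$-closure recalled in the Introduction: $\theta\in G^{(2),\Omega}$ if and only if for every pair $\alpha,\beta\in\Omega$ there is some $g\in G$ with $\alpha^\theta=\alpha^g$ and $\beta^\theta=\beta^g$. Applying $\varphi$ to the two equalities $\alpha^\theta=\alpha^g$ and $\beta^\theta=\beta^g$ and using the defining relations $\varphi(\alpha^\theta)=\varphi(\alpha)^{\Phi(\theta)}$ and $\varphi(\alpha^g)=\varphi(\alpha)^{\psi(g)}$ translates this condition verbatim into: for the points $\varphi(\alpha),\varphi(\beta)$ there is an element $\psi(g)\in H$ that agrees with $\Phi(\theta)$ on both.

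Next I would observe that, since $\varphi$ is a bijection, as $\alpha,\beta$ run over $\Omega$ the points $\varphi(\alpha),\varphi(\beta)$ run over all of $\Gamma$, and since $\psi$ is surjective, $\psi(g)$ runs over all of $H$. Hence the translated condition is exactly the pointwise criterion for $\Phi(\theta)\in H^{(2),\Gamma}$. This yields the equivalence $\theta\in G^{(2),\Omega}\iff\Phi(\theta)\in H^{(2),\Gamma}$, so $\Phi$ restricts to an isomorphism $G^{(2),\Omega}\to H^{(2),\Gamma}$, and the pair $(\varphi,\Phi|_{G^{(2),\Omega}})$ is the sought permutation isomorphism.

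I do not expect a genuine obstacle: the statement is essentially the functoriality of the $2$-closure under relabelling of the underlying set, and the only thing needing care is the bookkeeping — getting the direction of the conjugation defining $\Phi$ right, and being explicit that the surjectivity of both $\varphi$ and $\psi$ is what upgrades the clause ``there exists $g\in G$'' to ``there exists $h\in H$'', so that the two pointwise criteria match rather than merely one implying the other. As a sanity check I might instead note that $(\varphi,\Phi)$ sends the diagonal $G$-action on $\Omega\times\Omega$ to the diagonal $H$-action on $\Gamma\times\Gamma$, hence maps the $G$-orbits on $\Omega\times\Omega$ bijectively onto the $H$-orbits on $\Gamma\times\Gamma$; since each $2$-closure is the largest subgroup preserving the respective orbit partition, $\Phi$ must carry one onto the other.
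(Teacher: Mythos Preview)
Your argument is correct and is the standard way to prove this statement: transport everything via the conjugation isomorphism $\Phi$ induced by the bijection $\varphi$, and check that the pointwise criterion for membership in the $2$-closure translates literally. The paper does not give its own proof here; it merely cites the result as \cite[Lemma~2.2]{arezoomand}, so there is nothing to compare against beyond noting that your approach is exactly the natural one.
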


\begin{lemma}{\rm (\cite[Lemma 2.3]{arezoomand})}\label{Lemma2.3arezoomand}
Let $G\leq\Sym(\Omega)$ and $x\in\Sym(\Omega)$. Then $(x^{-1}Gx)^{(2),\Omega}=x^{-1}G^{(2),\Omega}x$. In particular, $N_{\Sym(\Omega)}(G)\leq N_{\Sym(\Omega)}(G^{(2),\Omega})$.
\end{lemma}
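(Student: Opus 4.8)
The final statement to prove is Lemma~\ref{Lemma2.3arezoomand}:

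\begin{lemma}
Let $G\leq\Sym(\Omega)$ and $x\in\Sym(\Omega)$. Then $(x^{-1}Gx)^{(2),\Omega}=x^{-1}G^{(2),\Omega}x$. In particular, $N_{\Sym(\Omega)}(G)\leq N_{\Sym(\Omega)}(G^{(2),\Omega})$.
\end{lemma}

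\begin{proof}[Proof proposal]
The plan is to work directly from the orbit characterization of the $2$-closure and to exploit the fact that conjugation by $x\in\Sym(\Omega)$ is simply a relabelling of the points of $\Omega$. First I would observe that for a fixed $x\in\Sym(\Omega)$, the map $\sigma_x\colon\Omega\times\Omega\to\Omega\times\Omega$ given by $(\alpha,\beta)\mapsto(\alpha^x,\beta^x)$ is a bijection, and that the induced action of the conjugate $x^{-1}gx$ on a pair satisfies $(\alpha,\beta)^{x^{-1}gx}=\bigl((\alpha^{x^{-1}})^{g},(\beta^{x^{-1}})^{g}\bigr)^{x}$. From this identity one reads off that $\sigma_x$ carries the $G$-orbits on $\Omega\times\Omega$ bijectively onto the $(x^{-1}Gx)$-orbits on $\Omega\times\Omega$: a set $R\subseteq\Omega\times\Omega$ is a $G$-orbit if and only if $R^{\sigma_x}$ is an $(x^{-1}Gx)$-orbit.

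With this correspondence in hand, the core inclusion follows by a straightforward verification using the explicit description of the $2$-closure quoted from \cite[Theorem 5.6]{Wielandt}. I would take an arbitrary $\theta\in G^{(2),\Omega}$ and show that $x^{-1}\theta x\in(x^{-1}Gx)^{(2),\Omega}$. Concretely, given any $\alpha,\beta\in\Omega$, apply the defining property of $\theta$ to the points $\alpha^{x^{-1}},\beta^{x^{-1}}$ to obtain some $g\in G$ with $(\alpha^{x^{-1}})^\theta=(\alpha^{x^{-1}})^{g}$ and $(\beta^{x^{-1}})^\theta=(\beta^{x^{-1}})^{g}$; then the element $x^{-1}gx\in x^{-1}Gx$ witnesses that $x^{-1}\theta x$ agrees with some element of $x^{-1}Gx$ at the pair $(\alpha,\beta)$. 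This gives $x^{-1}G^{(2),\Omega}x\leq(x^{-1}Gx)^{(2),\Omega}$. The reverse inclusion requires no new work: since $x$ is arbitrary in $\Sym(\Omega)$, I would apply the inclusion just proved to the group $x^{-1}Gx$ and the element $x^{-1}$, yielding $x(x^{-1}Gx)^{(2),\Omega}x^{-1}\leq G^{(2),\Omega}$, which is equivalent to the reverse containment. Together these give the claimed equality.

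Finally, the normalizer assertion is an immediate consequence. If $x\in N_{\Sym(\Omega)}(G)$, then $x^{-1}Gx=G$, so the equality just established reads $G^{(2),\Omega}=x^{-1}G^{(2),\Omega}x$, i.e.\ $x\in N_{\Sym(\Omega)}(G^{(2),\Omega})$. Hence $N_{\Sym(\Omega)}(G)\leq N_{\Sym(\Omega)}(G^{(2),\Omega})$.

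I do not anticipate any serious obstacle here; the argument is purely formal. The one point demanding a little care is keeping the conjugation conventions consistent (whether $\alpha^{x^{-1}\theta x}$ is parsed as $((\alpha^{x^{-1}})^\theta)^x$ and matching this against the action of $x^{-1}gx$), so that the witnessing element $g\in G$ supplied by the $2$-closure of $G$ is correctly transported to a witnessing element $x^{-1}gx\in x^{-1}Gx$. Once the bookkeeping in the pairwise condition is set up cleanly, both inclusions and the normalizer statement drop out.
\end{proof}
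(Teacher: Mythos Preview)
Your proof is correct. The paper does not give its own proof of this lemma --- it is quoted verbatim as \cite[Lemma~2.3]{arezoomand} --- so there is nothing to compare against, but your argument via the pairwise characterization from \cite[Theorem~5.6]{Wielandt}, establishing one inclusion directly and the other by substituting $(x^{-1}Gx,x^{-1})$ for $(G,x)$, is the standard and expected route.
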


\begin{lemma}{\rm (\cite[Lemma 2.9]{arezoomand})}\label{Lemma2.9arezoomand} Let $G_i\leq\Sym(\Omega_i)$, $\Omega$ be disjoint union of $\Omega_i$'s, $i=1,\ldots,n$ and
$G=G_1\times\cdots\times G_n$. Then the natural action  of $G$ on $\Omega$ is faithful. Furthermore, if $G^{(2),\Omega}=G$
then for each $i=1,\ldots,n$,
$G_i^{(2),\Omega_i}=G_i$. In particular, if $G$ is a totally $2$-closed group then $G_i$ is a totally $2$-closed group, $i=1,\ldots,n$.
\end{lemma}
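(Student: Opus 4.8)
The plan is to prove the two assertions separately: first that $Z:=Z(G)$ is cyclic, and then that $X:=G/Z$ has a unique minimal normal subgroup, which is nonabelian. The first is immediate from Theorem B. Indeed, $Z$ is an abelian normal subgroup of $G$, so $Z\le F(G)$, and since every element of $Z$ centralises $F(G)$ we in fact have $Z\le Z(F(G))$. By Theorem B, $F(G)$ is cyclic or a direct product of a generalised quaternion group $Q$ with a cyclic group $C$ of odd order; in the latter case $Z(F(G))=Z(Q)\times C$ is cyclic, since $Z(Q)$ has order $2$ and $C$ has odd order. Either way $Z(F(G))$, and hence $Z$, is cyclic.

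For the second assertion I would first record that $G$ has a nontrivial layer $E(G)$ (the product of its components). Because $F(G)$ is cyclic or $Q\times C$, its automorphism group is soluble, so $G/C_G(F(G))$, which embeds in $\Aut(F(G))$, is soluble. If $E(G)$ were trivial, then the generalised Fitting subgroup would satisfy $F^\ast(G)=F(G)$, and the standard containment $C_G(F^\ast(G))\le F^\ast(G)$ would give $C_G(F(G))\le F(G)$; but then $C_G(F(G))$ is nilpotent with soluble quotient $G/C_G(F(G))$, so $G$ would be soluble, contrary to hypothesis. Thus $E(G)\neq 1$, and the image of $E(G)$ in $X$ is a nontrivial normal subgroup whose composition factors are nonabelian, so $\soc(X)$ has a nonabelian part. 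What remains is to rule out (a) an abelian part of $\soc(X)$ and (b) more than one nonabelian minimal normal subgroup. I would note that ruling out (a) is equivalent to showing $F(G)=Z$: a nontrivial $F(G)/Z\le F(X)$ would supply an abelian minimal normal subgroup of $X$ distinct from the nonabelian one already found, while conversely $F(X)=1$ forces $F(G)\le Z$, whence $F(G)=Z$ is cyclic.

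The crux is to exploit the minimality of $|G|$. Its decisive consequence is that every insoluble totally $2$-closed group of order strictly less than $|G|$ has trivial Fitting subgroup, and is therefore one of the groups in the classification of \cite{AIPT} (a direct product of at most five pairwise nonisomorphic simple groups drawn from the Janko-type lists). The only route by which the totally $2$-closed hypothesis descends to a proper section is Lemma \ref{Lemma2.9arezoomand}: a direct factor of a totally $2$-closed group is again totally $2$-closed. Accordingly I would argue that $G$ is directly indecomposable, for a nontrivial decomposition $G=A\times B$ would, by Lemma \ref{Lemma2.9arezoomand}, make both $A$ and $B$ totally $2$-closed, and combining minimality with Theorems A and B and the list of \cite{AIPT} would force a structure incompatible with $G$ being the minimal insoluble example with nontrivial Fitting subgroup. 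I would then translate direct indecomposability, together with the relation $C_G(F^\ast(G))\le F^\ast(G)$ and a Clifford-theoretic analysis of how $G$ permutes its components and acts on the abelian socle of $X$, into the two statements $F(G)=Z$ and that the nonabelian socle of $X$ is a single minimal normal subgroup.

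The main obstacle I anticipate is exactly this last translation, because being totally $2$-closed is not inherited by arbitrary subgroups or quotients, so the minimality hypothesis can only be accessed through direct-factor decompositions, or else by explicitly constructing a faithful permutation representation of $G$ in which a non-monolithic $X=G/Z$ would fail to be $2$-closed. Bridging the gap between the abstract socle structure of $X$ and a concrete witness of non-closure for $G$ itself, while keeping track of the cyclic centre $Z$ through which the components are glued, is the delicate part of the argument.
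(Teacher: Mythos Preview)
Your proposal does not address the stated lemma at all. The statement you were asked to prove is Lemma~\ref{Lemma2.9arezoomand}: that the natural action of $G=G_1\times\cdots\times G_n$ on the disjoint union $\Omega=\Omega_1\sqcup\cdots\sqcup\Omega_n$ is faithful, and that if $G$ is $2$-closed on $\Omega$ then each $G_i$ is $2$-closed on $\Omega_i$ (whence direct factors of totally $2$-closed groups are totally $2$-closed). Instead, you have written a proof sketch for Theorem~C --- the assertion that a minimal insoluble totally $2$-closed group with nontrivial Fitting subgroup has shape $Z.X$ with $Z=Z(G)$ cyclic and $X$ monolithic with nonabelian socle. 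You even invoke Lemma~\ref{Lemma2.9arezoomand} as a tool in your argument, which confirms you are treating it as an input rather than as the target.

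Nothing in your write-up establishes either claim of the lemma. A proof of Lemma~\ref{Lemma2.9arezoomand} should argue directly with the permutation action: faithfulness is immediate since an element $(g_1,\ldots,g_n)$ fixing every point of $\Omega$ fixes every point of each $\Omega_i$, forcing each $g_i=1$; and for the $2$-closure assertion one shows that any $x\in G_i^{(2),\Omega_i}$, extended by the identity on $\Omega\setminus\Omega_i$, lies in $G^{(2),\Omega}=G$, hence in $G_i$. (In the paper this lemma is simply cited from \cite{arezoomand} without proof.) You need to discard your current text and prove the actual statement.
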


\begin{lemma}{\rm (\cite[Lemma 4.5]{arezoomand})}\label{Lemma4.5arezoomand}
 Let $G=H\times K\leq\Sym(\Omega)$ be transitive and $\Omega=\alpha^G$. If $(|H|,|K|)=1$, then
the action of $G$ on $\Omega$ is
equivalent to the action of $G$ on $\Omega_1\times\Omega_2$, where $\Omega_1=\alpha^H$, $\Omega_2=\alpha^K$
and $G$ acts on $\Omega_1\times\Omega_2$ by the rule $(\alpha^h,\alpha^k)^{g}=(\alpha^{hh_1},\alpha^{kk_1})$, where $g=h_1k_1$.
\end{lemma}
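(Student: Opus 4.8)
The plan is to produce an explicit $G$-equivariant bijection between $\Omega$ and $\Omega_1\times\Omega_2$, with the whole argument hinging on a splitting of the point stabiliser that is forced by the coprimality hypothesis. Write $G_\alpha$, $H_\alpha=H\cap G_\alpha$ and $K_\alpha=K\cap G_\alpha$ for the stabilisers of $\alpha$ in $G$, $H$ and $K$ respectively, so that $\Omega_1=\alpha^H$ has size $|H:H_\alpha|$ and $\Omega_2=\alpha^K$ has size $|K:K_\alpha|$. The first and crucial step is to prove
\[
G_\alpha=H_\alpha\times K_\alpha.
\]
The inclusion $\supseteq$ is clear. For the reverse, take $g\in G_\alpha$ and write $g=hk$ with $h\in H$, $k\in K$; since $H$ and $K$ commute elementwise and $(|h|,|k|)$ divides $(|H|,|K|)=1$, the element $g$ has order $|h|\,|k|$ and $\langle g\rangle=\langle h\rangle\times\langle k\rangle$, whence both $h$ and $k$ are powers of $g$. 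As $g$ fixes $\alpha$, so do $h$ and $k$, giving $h\in H_\alpha$ and $k\in K_\alpha$. This is the only place where coprimality is used, and it is precisely what rules out ``diagonal'' stabiliser elements; I expect this to be the main (indeed essentially the only) obstacle, the rest being bookkeeping.

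Granting the splitting, a cardinality count gives $|\Omega|=|G:G_\alpha|=|H:H_\alpha|\,|K:K_\alpha|=|\Omega_1|\,|\Omega_2|$, so it suffices to exhibit an injective $G$-map. I would define
\[
\phi\colon\Omega\to\Omega_1\times\Omega_2,\qquad \phi(\alpha^{hk})=(\alpha^h,\alpha^k)\quad(h\in H,\ k\in K),
\]
using transitivity to write every point of $\Omega$ as $\alpha^{hk}$. To see this is well defined, suppose $\alpha^{hk}=\alpha^{h'k'}$; then $(hk)(h'k')^{-1}=(hh'^{-1})(kk'^{-1})\in G_\alpha$, and by the splitting together with uniqueness of the $H\times K$ decomposition we get $hh'^{-1}\in H_\alpha$ and $kk'^{-1}\in K_\alpha$, i.e. $\alpha^h=\alpha^{h'}$ and $\alpha^k=\alpha^{k'}$. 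The same computation read backwards yields injectivity, and surjectivity is immediate (or follows from the cardinality count).

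It remains to check equivariance with respect to the stated rule $(\alpha^h,\alpha^k)^{g}=(\alpha^{hh_1},\alpha^{kk_1})$ for $g=h_1k_1$. First one verifies that this rule genuinely defines an action on $\Omega_1\times\Omega_2$: well-definedness on representatives is the same stabiliser computation as above, and the action axioms follow from $gg'=(h_1h_2)(k_1k_2)$ together with the commutativity of $H$ and $K$. Then, for $\omega=\alpha^{hk}$ and $g=h_1k_1$, commutativity gives $\omega^g=\alpha^{hkh_1k_1}=\alpha^{(hh_1)(kk_1)}$, so
\[
\phi(\omega^{g})=(\alpha^{hh_1},\alpha^{kk_1})=(\alpha^{h},\alpha^{k})^{g}=\phi(\omega)^{g},
\]
which shows that $\phi$ is a permutation isomorphism intertwining the two actions and completes the proof.
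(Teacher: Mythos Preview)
Your proof is correct and complete. The splitting $G_\alpha=H_\alpha\times K_\alpha$ via the coprimality argument (writing $h$ and $k$ as powers of $g=hk$) is exactly the right observation, and the construction and verification of the equivariant bijection $\phi$ are carried out cleanly.

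As for comparison with the paper: there is nothing to compare. The paper does not prove this lemma; it is quoted as a preliminary result from \cite{arezoomand} (Lemma~4.5 there) and stated without proof. Your argument is the natural one and almost certainly coincides with the original proof in that reference, since the stabiliser decomposition forced by coprimality is really the only idea available.
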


\begin{theorem}{\rm(\cite[Theorem 1]{arezoomand})}\label{Theorem1arezoomand}
The center of every finite totally $2$-closed group is cyclic.
\end{theorem}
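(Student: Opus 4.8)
The plan is to argue by contradiction, exploiting the last assertion of Lemma \ref{Lemma2.1arezoomand}. Write $Z=Z(G)=C_G(G)$ and suppose, for contradiction, that $Z$ is not cyclic. Applying Lemma \ref{Lemma2.1arezoomand} with $H=G$ to any faithful $G\leq\Sym(\Omega)$ gives $Z^{(2),\Omega}\leq C_{G^{(2),\Omega}}(G^{(2),\Omega})=Z(G^{(2),\Omega})$; since $G$ is totally $2$-closed this reads $Z^{(2),\Omega}\leq Z(G)=Z$, and as $Z\leq Z^{(2),\Omega}$ always holds we obtain $Z^{(2),\Omega}=Z$ for every faithful permutation representation of $G$. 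Thus it suffices to exhibit a single faithful $G$-set $\Omega$ on which the (faithful) induced action of $Z$ fails to be $2$-closed.

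The representation I would use is built from central subgroups. Choose $K_1,K_2,K_3\leq Z$ and let $\Omega=(G/K_1)\sqcup(G/K_2)\sqcup(G/K_3)$, with $G$ acting by left translation on each block. Because each $K_i$ is central, it is normal and the kernel of $G$ on $G/K_i$ is exactly $K_i$; hence the $G$-action on $\Omega$ is faithful precisely when $K_1\cap K_2\cap K_3=1$. Restricted to $Z$, the $i$-th block is a disjoint union of copies of the regular action of $Z/K_i$.

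Now define $\theta\in\Sym(\Omega)$ to agree with a fixed element $z_0\in Z$ on the first block $G/K_1$ and with the identity on $G/K_2$ and $G/K_3$. Using the pointwise criterion for the $2$-closure recalled in the introduction, one checks blockwise that $\theta\in Z^{(2),\Omega}$ if and only if $z_0\in(K_1K_2)\cap(K_1K_3)$: for a pair of points lying in the same block one matches $\theta$ by $z_0$ or by $1$, while for a point of the first block paired with a point of the $j$-th block ($j\in\{2,3\}$) one needs some $z\in Z$ with $z\equiv z_0\pmod{K_1}$ and $z\in K_j$, which exists exactly when $z_0\in K_1K_j$. On the other hand $\theta$ coincides with the permutation induced by an element of $Z$ if and only if there is $z\in K_2\cap K_3$ with $z\equiv z_0\pmod{K_1}$, that is, if and only if $z_0\in K_1(K_2\cap K_3)$. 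Therefore $\theta\in Z^{(2),\Omega}\setminus Z$, contradicting $Z^{(2),\Omega}=Z$, as soon as we can arrange $K_1\cap K_2\cap K_3=1$ together with $z_0\in(K_1K_2)\cap(K_1K_3)\setminus K_1(K_2\cap K_3)$.

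Finally, I would produce this configuration from the invariant factor decomposition $Z=\langle x_1\rangle\times\cdots\times\langle x_k\rangle$, where $d_i=|x_i|$ satisfy $d_1\mid\cdots\mid d_k$ and $k\geq 2$, $d_1\geq 2$ since $Z$ is non-cyclic. Taking $K_1=\langle x_1\rangle$, $K_2=\langle x_1x_2,x_3,\ldots,x_k\rangle$, $K_3=\langle x_2,x_3,\ldots,x_k\rangle$ and $z_0=x_2$, one has $K_1\cap K_3=1$ (so the triple intersection is trivial), $K_1K_2=K_1K_3=Z\ni z_0$, while $K_2\cap K_3=\langle x_2^{d_1},x_3,\ldots,x_k\rangle$, so every element of $K_1(K_2\cap K_3)$ has $x_2$-exponent divisible by $d_1\geq 2$ and hence $z_0=x_2\notin K_1(K_2\cap K_3)$. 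The creative part of the argument is the passage from Lemma \ref{Lemma2.1arezoomand} to the single block-translation symmetry $\theta$, and the recognition that three central-quotient blocks (mimicking the non-$2$-closed faithful action of $C_p\times C_p$ on three regular $C_p$-sets) are exactly what is needed; the main technical obstacle is then the bookkeeping verifying the membership conditions for $\theta$, which the invariant factor choice above makes routine.
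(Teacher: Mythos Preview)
This statement is not proved in the present paper; it is quoted from \cite{arezoomand} (Theorem~1 there) as a preliminary result, so there is no proof here against which to compare your argument.

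Your argument is nonetheless correct. The reduction via the last clause of Lemma~\ref{Lemma2.1arezoomand} (with $H=G$) is valid and indeed forces $Z^{(2),\Omega}=Z$ for every faithful $G$-set $\Omega$. Your three-block coset construction and the case analysis determining when the block-supported permutation $\theta$ lies in $Z^{(2),\Omega}$ versus in (the image of) $Z$ are accurate: the pair conditions reduce exactly to $z_0\in K_1K_2\cap K_1K_3$ and $z_0\in K_1(K_2\cap K_3)$ respectively. The invariant-factor choice $K_1=\langle x_1\rangle$, $K_2=\langle x_1x_2,x_3,\ldots,x_k\rangle$, $K_3=\langle x_2,\ldots,x_k\rangle$, $z_0=x_2$ does satisfy all the required conditions: $K_1\cap K_3=1$ gives faithfulness, $K_1K_2=K_1K_3=Z$, and since $d_1\ge 2$ divides $d_2$ the congruence $d_1f\equiv 1\pmod{d_2}$ has no solution, so $x_2\notin K_1(K_2\cap K_3)$. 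The idea of lifting the standard ``$C_p\times C_p$ on three regular $C_p$-orbits'' obstruction to $G$ via central coset spaces is exactly the natural one here.
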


\begin{theorem}{\rm(\cite[Theorem 2]{arezoomand})}\label{Theorem2arezoomand} 
A finite nilpotent  group is totally $2$-closed if and only if it is cyclic or a direct product of a generalized quaternion group with a cyclic group of odd order.
\end{theorem}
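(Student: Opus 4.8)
\emph{Proof proposal.} The plan is to prove both implications by reducing to $p$-groups, for which I would use two elementary tools. \emph{Tool (i):} if a faithful $G$-set $\Omega$ has an orbit $\Omega_0$ on which $G$ acts regularly, then $G^{(2),\Omega}=G$; indeed the $2$-closure of a regular representation $R(H)$ is $R(H)$ itself (any element of it must agree with right translation by the image of the identity), and the restriction $G^{(2),\Omega}\to\Sym(\Omega_0)$ is injective --- by the defining property of the $2$-closure, a $\theta\in G^{(2),\Omega}$ trivial on the regular orbit is trivial on every orbit. \emph{Tool (ii):} if $G$ is totally $2$-closed and $G=AB$ with $A\cap B=1$, $B$ a proper subgroup, and $A$ core-free, then we reach a contradiction: on $\Omega=(G/A)\sqcup(G/B)$ the group $G$ is faithful, and it is transitive on $G/A\times G/B$ (the base-point stabiliser is $A\cap B=1$ and $|G/A|\,|G/B|=|G|$), so the orbitals joining the two orbits are single orbits and hence $G^{(2),\Omega}=(G|_{G/A})^{(2)}\times(G|_{G/B})^{(2)}$ has order at least $|G|\cdot|G:B|>|G|$. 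Now write $G=P_1\times\cdots\times P_k$ for the Sylow decomposition of the nilpotent group $G$. By Lemma~\ref{Lemma2.9arezoomand}, total $2$-closedness passes to the $P_i$; conversely, given a faithful $G$-set, on each orbit Lemma~\ref{Lemma4.5arezoomand} (iterated, using coprimality) presents the action as a product action of the $P_i$, for which the $2$-closure is the product of the $2$-closures of the factors, and combining over orbits shows $G^{(2),\Omega}=G$ as soon as each $P_i$ is totally $2$-closed. So $G$ is totally $2$-closed iff each $P_i$ is, and since a generalized quaternion group is a $2$-group, the stated description of $G$ follows once we prove that a $p$-group is totally $2$-closed iff it is cyclic or generalized quaternion.

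\emph{Necessity for $p$-groups.} Let $P$ be a totally $2$-closed $p$-group, and suppose it is neither cyclic nor generalized quaternion. If $P$ has a normal subgroup $A\cong C_p\times C_p$, then $A\not\le Z(P)$ (otherwise $Z(P)$ is noncyclic, contradicting Theorem~\ref{Theorem1arezoomand}), and a faithful representation assembled from coset spaces meeting $A$ in its various order-$p$ subgroups $A_1,\dots,A_{p+1}$ violates total $2$-closedness: in the model case $P=A$ one takes $\Omega=\bigsqcup_{i=1}^{p+1}A/A_i$, where each orbit $A/A_i$ is a regular $C_p$-set and $A$ is transitive on $A/A_i\times A/A_j$ for $i\ne j$, so $A^{(2),\Omega}=\prod_i(A|_{A/A_i})^{(2)}\cong C_p^{p+1}$, which properly contains $A$; the general case is handled similarly, or by reducing to Tool (ii) once a non-normal order-$p$ subgroup is located. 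If instead $P$ has no normal $C_p\times C_p$, then every abelian normal subgroup of $P$ is cyclic, so by the classical structure theorem $P$ is cyclic or --- when $p=2$ --- possibly dihedral, semidihedral, or generalized quaternion; excluding cyclic and generalized quaternion by hypothesis, and the order-$8$ dihedral group because it has a normal $C_2\times C_2$, we are left with $P$ dihedral or semidihedral of order at least $16$, in which case a reflection generates a non-normal subgroup $A_0$ of order $2$ lying outside $\Phi(P)$, so some maximal subgroup $B$ avoids $A_0$, giving $P=A_0B$ with $A_0\cap B=1$ and $B$ proper, and Tool (ii) yields a contradiction. Hence $P$ is cyclic or generalized quaternion.

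\emph{Sufficiency for $p$-groups, and assembly.} Cyclic groups are totally $2$-closed by Monks~\cite{Monks}. If $Q$ is generalized quaternion then its unique subgroup of order $2$ lies in every nontrivial normal subgroup, so in any faithful $Q$-set the orbit kernels cannot all be nontrivial, i.e.\ some orbit is regular, and Tool (i) gives $Q^{(2),\Omega}=Q$; thus $Q$ is totally $2$-closed. For $A$ cyclic of odd order, $\gcd(|Q|,|A|)=1$, and the coprime-product analysis from the reduction shows $Q\times A$ is totally $2$-closed. Feeding the $p$-group classification back through the reduction: at most one Sylow subgroup can be generalized quaternion (it is then the Sylow $2$-subgroup), the rest being cyclic, so $G$ is either cyclic (all Sylow subgroups cyclic) or a generalized quaternion group times a cyclic group of odd order. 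This proves the theorem.

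\emph{Main obstacle.} The delicate step is the necessity direction when $P$ has a normal $C_p\times C_p$: the case $P=C_p\times C_p$ is transparent, but realising the coset-space construction (or reducing to Tool (ii) by producing a suitable non-normal subgroup of order $p$) for an arbitrary such $P$ takes real work; so does the precise behaviour of the $2$-closure under disjoint unions and under coprime product actions that underlies the reduction to Sylow subgroups.
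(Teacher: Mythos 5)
First, note that the paper does not prove this statement at all: it is Theorem~2 of \cite{arezoomand}, imported as a black box, so there is no internal proof to compare against. Judged on its own terms, your outline gets several things genuinely right: Tool~(i) (a faithful action with a regular orbit is $2$-closed) is correct and correctly justified; Tool~(ii) is essentially Lemma~\ref{sd} of the paper, i.e.\ a special case of Wielandt's Dissection Theorem; the regular-orbit argument showing that a generalized quaternion group (or any $p$-group with a unique minimal subgroup) is totally $2$-closed is complete; and the reduction of the necessity direction to Sylow subgroups via Lemma~\ref{Lemma2.9arezoomand} is fine. But there are two places where the argument as written would fail, not merely ``take real work''.

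The first is sufficiency for intransitive actions. You claim that ``combining over orbits shows $G^{(2),\Omega}=G$ as soon as each $P_i$ is totally $2$-closed''. The only general inequality available is $G^{(2),\Omega}\le \prod_j (G^{\Omega_j})^{(2),\Omega_j}=\prod_j G^{\Omega_j}$, and the right-hand side typically has order strictly larger than $|G|$; indeed your own necessity argument for $C_p\times C_p$ acting on $\bigsqcup_i A/A_i$ is exactly an instance where every transitive constituent is $2$-closed but the group is not. Lemma~\ref{Lemma4.5arezoomand} and Theorem~\ref{Theorem5.1Cam-Giu} treat a single orbit only, and a faithful action of $Q\times C_m$ ($m$ odd) need not have a regular orbit (take orbits whose kernels are $C_m$ and $Q$ respectively), so the cross-orbit orbitals must be analysed directly --- this is the actual content of the hard direction and it is absent. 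The second is the necessity step for a $p$-group $P$ containing a normal $A\cong C_p\times C_p$ with $|P|>p^2$: the model computation for $P=A$ uses transitivity of $P$ on $P/A_i\times P/A_j$, i.e.\ $P=A_i^xA_j^y$, which forces $|P|=p^2$, so the general case is emphatically not ``handled similarly''; and the fallback via Tool~(ii) requires a non-normal subgroup of order $p$ lying outside $\Phi(P)$ (so that some maximal subgroup avoids it), whereas all that follows from $Z(P)$ being cyclic is the existence of a non-central, hence non-normal, minimal subgroup, with no control on whether it sits inside $\Phi(P)$. Both gaps occur at the crux of the theorem, so the proposal is an honest plan rather than a proof.
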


\begin{theorem}{\rm(\cite[Theorem 5.1]{Cam-Giu})}\label{Theorem5.1Cam-Giu}
Let $G_1$ and $G_2$ be transitive permutation groups on sets $\Omega_1$ and $\Omega_2$, respectively. Then in their action on $\Omega:=\Omega_1\times\Omega_2$, we have 
\[(G_1\times G_2)^{(2),\Omega}=G_1^{(2),\Omega_1}\times G_2^{(2),\Omega_2},~~(G_1\wr G_2)^{(2),\Omega}=G_1^{(2),\Omega_1}\wr G_2^{(2),\Omega_2}.\]
Hence the following are equivalent:
\begin{itemize}
\item[(a)] $G_1$ and $G_2$ are $2$-closed on $\Omega_1$ and $\Omega_2$, respectively.
\item[(b)] $G_1\times G_2$ is $2$-closed on $\Omega$.
\item[(c)] $G_1\wr G_2$ is $2$-closed on $\Omega$.
\end{itemize}
\end{theorem}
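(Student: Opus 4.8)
The plan is to establish the two displayed identities separately, each by a pair of inclusions, and then to deduce the equivalence of (a), (b) and (c) as a formal consequence. Throughout I would work from the Wielandt description of the $2$-closure recalled in the introduction, namely that $G^{(2),\Omega}$ is precisely the set of $\theta\in\Sym(\Omega)$ fixing (setwise) every orbit of $G$ on $\Omega\times\Omega$; equivalently, $\theta$ preserves every binary relation on $\Omega$ that is a union of $G$-orbits. The whole argument is then a matter of identifying enough invariant relations to constrain the shape of an arbitrary $\theta$ in the closure, and conversely checking that the claimed permutations really do preserve all orbits.

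For the direct product, write $\Omega=\Omega_1\times\Omega_2$ with $(g_1,g_2)$ acting coordinate-wise. The inclusion $G_1^{(2),\Omega_1}\times G_2^{(2),\Omega_2}\leq(G_1\times G_2)^{(2),\Omega}$ is the easy half: the orbits of $G_1\times G_2$ on $\Omega\times\Omega$ are exactly the products of a $G_1$-orbit on $\Omega_1\times\Omega_1$ with a $G_2$-orbit on $\Omega_2\times\Omega_2$, and a permutation $\theta_1\times\theta_2$ with $\theta_i\in G_i^{(2),\Omega_i}$ visibly respects such products. For the reverse inclusion I would exploit the two relations $R_i=\{((\alpha_1,\alpha_2),(\beta_1,\beta_2)):\alpha_i=\beta_i\}$ ($i=1,2$), each of which is $G_1\times G_2$-invariant, hence preserved by any $\theta\in(G_1\times G_2)^{(2),\Omega}$. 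Preservation of $R_1$ forces the first coordinate of $(\alpha_1,\alpha_2)^{\theta}$ to be independent of $\alpha_2$, and preservation of $R_2$ forces its second coordinate to be independent of $\alpha_1$; so $\theta=\phi\times\psi$ for bijections $\phi\in\Sym(\Omega_1)$, $\psi\in\Sym(\Omega_2)$. Finally, testing $\theta$ on a pair $((\alpha_1,\alpha_2),(\beta_1,\alpha_2))$ with a common second coordinate produces, from the defining property of the closure, an element $g_1\in G_1$ agreeing with $\phi$ on $\{\alpha_1,\beta_1\}$; as $\alpha_1,\beta_1$ were arbitrary this gives $\phi\in G_1^{(2),\Omega_1}$, and symmetrically $\psi\in G_2^{(2),\Omega_2}$.

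The wreath product is the substantial case, and I expect the main obstacle to lie here. Realise $G_1\wr G_2=G_1^{\Omega_2}\rtimes G_2$ acting imprimitively on $\Omega=\Omega_1\times\Omega_2$ by $(\alpha_1,\alpha_2)^{(f,g_2)}=(\alpha_1^{f(\alpha_2)},\alpha_2^{g_2})$, so that the fibres $\Omega_1\times\{\alpha_2\}$ form a block system. The crucial step is to analyse the $G_1\wr G_2$-orbits on $\Omega\times\Omega$, splitting a pair $((\alpha_1,\alpha_2),(\beta_1,\beta_2))$ according to whether $\alpha_2=\beta_2$ or not. Here transitivity of both factors is essential: when $\alpha_2=\beta_2$ the orbit is determined solely by the $G_1$-orbit of $(\alpha_1,\beta_1)$ (the common fibre label ranging freely by transitivity of $G_2$), whereas when $\alpha_2\neq\beta_2$ the values $f(\alpha_2),f(\beta_2)$ are independent, so by transitivity of $G_1$ the two $\Omega_1$-coordinates decouple entirely and the orbit is determined solely by the $G_2$-orbit of $(\alpha_2,\beta_2)$. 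This decoupling is exactly what makes the full base group $G_1^{(2),\Omega_1}$ reappear in the closure, and pinning it down cleanly is the delicate point.

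With this orbit description in hand the two inclusions run in parallel with the direct-product case. Given $\theta\in(G_1\wr G_2)^{(2),\Omega}$, the ``same fibre'' relation $E=\{((\alpha_1,\alpha_2),(\beta_1,\beta_2)):\alpha_2=\beta_2\}$ is a union of orbits, hence preserved by $\theta$; since $E$ is an equivalence relation this shows $\theta$ permutes the fibres, inducing $\tau\in\Sym(\Omega_2)$ and fibre maps $\lambda_{\alpha_2}\in\Sym(\Omega_1)$ with $(\alpha_1,\alpha_2)^{\theta}=(\alpha_1^{\lambda_{\alpha_2}},\alpha_2^{\tau})$. Testing $\theta$ on a different-fibre pair yields, via the closure property, a $g_2\in G_2$ matching $\tau$ on two prescribed points, so $\tau\in G_2^{(2),\Omega_2}$; testing it on a same-fibre pair yields an $h\in G_1$ matching $\lambda_{\alpha_2}$ on two prescribed points, so each $\lambda_{\alpha_2}\in G_1^{(2),\Omega_1}$; thus $\theta\in G_1^{(2),\Omega_1}\wr G_2^{(2),\Omega_2}$. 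Conversely, any such $\theta$ sends same-fibre pairs to same-fibre pairs and different-fibre pairs to different-fibre pairs, and the orbit description (together with $\lambda_{\alpha_2}\in G_1^{(2),\Omega_1}$ and $\tau\in G_2^{(2),\Omega_2}$ preserving the relevant $G_1$- and $G_2$-orbits) shows the ambient $G_1\wr G_2$-orbit is preserved, giving the reverse inclusion. Finally, the equivalences (a)$\Leftrightarrow$(b)$\Leftrightarrow$(c) follow formally: since $G_i\leq G_i^{(2),\Omega_i}$ always holds, each displayed identity shows that the product (respectively the wreath product) is $2$-closed precisely when both factors are, by comparing the two sides factor by factor.
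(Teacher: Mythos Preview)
The paper does not give its own proof of this statement: it is quoted verbatim as a preliminary result from \cite[Theorem 5.1]{Cam-Giu} and used as a black box (e.g.\ in the proof of Lemma~\ref{ptrans} and Corollary~\ref{nilp2}). So there is no in-paper argument to compare against.

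Your proposed proof is correct and is essentially the standard argument one finds for this result. The key points are all identified accurately: for the direct product, the coordinate-equality relations $R_1,R_2$ force any $\theta$ in the closure to split as $\phi\times\psi$, and then the Wielandt pointwise criterion places $\phi,\psi$ in the respective $2$-closures; for the wreath product, the decisive observation is indeed the dichotomy in the orbit structure on $\Omega\times\Omega$ according to whether the two $\Omega_2$-coordinates agree, with transitivity of $G_1$ making the $\Omega_1$-coordinates decouple completely in the different-fibre case. Your handling of both inclusions in the wreath case is clean, and the deduction of the equivalences (a)$\Leftrightarrow$(b)$\Leftrightarrow$(c) by comparing factors is exactly right. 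One very small remark: in the direct-product case you do not actually use transitivity of either factor, and in fact that identity holds without it; transitivity is genuinely needed only for the wreath product identity, precisely where you invoke it.
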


\begin{theorem}{\rm(\cite[Dissection Theorem 6.5]{Wielandt})}\label{DissectionTheorem6.5Wielandt}
Let $G$ acts on a set $\Omega$, and suppose that $\Omega=\Delta\cup\Gamma$ (disjoint union), where $\Gamma$ is $G$-invariant. Then
the following are equivalent:
\begin{itemize}
\item[(1)] $G^\Gamma\times G^\Delta\leq G^{(2),\Omega}$.
\item[(2)] $G=G_\gamma G_\delta$ for all $\gamma\in\Gamma$ and $\delta\in\Delta$.
\item[(3)] $G_\delta$ is transitive on $\gamma^G$ for all $\gamma\in\Gamma$ and $\delta\in\Delta$.
\end{itemize}
\end{theorem}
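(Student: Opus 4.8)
The plan is to take the set-theoretic description of the $2$-closure recalled in the introduction, namely that $\theta\in G^{(2),\Omega}$ precisely when for every pair $\alpha,\beta\in\Omega$ there is some $g\in G$ with $\alpha^\theta=\alpha^g$ and $\beta^\theta=\beta^g$, as the main engine, and to run it against the two transparent blocks $(2)\Leftrightarrow(3)$ and $(1)\Leftrightarrow(2)$. First I would record the standing observation that, since $\Gamma$ is $G$-invariant and $\Omega=\Delta\cup\Gamma$ is a disjoint union, the complement $\Delta$ is $G$-invariant as well; consequently every $\theta\in G^{(2),\Omega}$ fixes both $\Gamma$ and $\Delta$ setwise (apply the description with $\beta=\alpha$ and use that $\alpha^g$ stays in the part containing $\alpha$), so $G^{(2),\Omega}\le\Sym(\Gamma)\times\Sym(\Delta)$ and the product $G^\Gamma\times G^\Delta$ makes sense as the subgroup of $\Sym(\Omega)$ acting as $G^\Gamma$ on $\Gamma$ and independently as $G^\Delta$ on $\Delta$. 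Into this product $G$ embeds diagonally, and the content of the theorem is exactly when the whole product lies in the $2$-closure.

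For $(2)\Leftrightarrow(3)$ I would argue pointwise, for a fixed pair $\gamma\in\Gamma$, $\delta\in\Delta$. Identifying $\gamma^G$ with the coset space $G_\gamma\backslash G$, the subgroup $G_\delta$ is transitive on $\gamma^G$ exactly when for each $g\in G$ there is $d\in G_\delta\cap G_\gamma g$; unwinding, $d=xg$ with $x\in G_\gamma$ gives $g=x^{-1}d\in G_\gamma G_\delta$, and conversely. Hence $G_\delta$ is transitive on $\gamma^G$ if and only if $G=G_\gamma G_\delta$, and quantifying over all admissible pairs yields $(2)\Leftrightarrow(3)$. This step is routine coset bookkeeping and should present no difficulty.

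For $(1)\Rightarrow(2)$, fix $\gamma,\delta$ and an arbitrary $g\in G$, and test the description on the element $\theta\in G^\Gamma\times G^\Delta$ that acts as $g$ on $\Gamma$ and trivially on $\Delta$; applying it to the pair $(\gamma,\delta)$ produces $h\in G$ with $\gamma^h=\gamma^g$ and $\delta^h=\delta$, whence $h\in G_\delta$ and $h\in G_\gamma g$, so $g\in G_\gamma G_\delta$. The reverse implication $(2)\Rightarrow(1)$ is the heart of the matter, and I expect it to be the main obstacle. Given an arbitrary $\theta=(g_1|_\Gamma,g_2|_\Delta)\in G^\Gamma\times G^\Delta$ one must verify the description for every pair $\alpha,\beta\in\Omega$. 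When $\alpha,\beta$ lie in the same part the single element $g_1$ (or $g_2$) works, so the only genuine case is $\alpha\in\Gamma$, $\beta\in\Delta$, where one needs $h\in G$ simultaneously agreeing with $g_1$ at $\alpha$ and with $g_2$ at $\beta$; this is the requirement $G_\alpha g_1\cap G_\beta g_2\neq\emptyset$, which I would show is equivalent to $g_1g_2^{-1}\in G_\alpha G_\beta$. Since $\alpha\in\Gamma$ and $\beta\in\Delta$, hypothesis $(2)$ gives $G=G_\alpha G_\beta$, so the coset intersection is automatically nonempty and the desired $h$ exists. The crux is thus recognising that the mixed case reduces precisely to the factorisation $G=G_\alpha G_\beta$; all remaining cases, together with the symmetric manipulations (using $G=HK\Leftrightarrow G=KH$), are formal.
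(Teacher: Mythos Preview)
Your proof is correct: the equivalence $(2)\Leftrightarrow(3)$ is the standard coset reformulation of transitivity, and both directions of $(1)\Leftrightarrow(2)$ are handled cleanly via the pointwise description of the $2$-closure, with the key observation that the mixed case in $(2)\Rightarrow(1)$ reduces to the nonemptiness of $G_\alpha g_1\cap G_\beta g_2$, equivalently $g_1g_2^{-1}\in G_\alpha G_\beta=G$.

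There is, however, nothing to compare against in the paper: this theorem is quoted in the preliminaries as \cite[Dissection Theorem 6.5]{Wielandt} and is not proved here. Your argument is essentially the standard one (and is presumably what Wielandt does), so it is entirely appropriate as a self-contained justification.
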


\begin{theorem}{\rm (Universal embedding theorem \cite[Theorem 2.6 A]{Dixon})}\label{universal} Let $G$ be an arbitrary group
with a normal subgroup $N$ and put $K:=G/N$. Let $\psi: G\rightarrow K$ be a homomorphism of $G$ onto $K$ with kernel $N$.
Let $T:=\{t_u\mid u\in K\}$ be a set of right coset representatives of $N$ in $G$ such that $\psi(t_u)=u$ for each $u\in K$. Let
$x\in G$ and $f_x:K\rightarrow N$ be the map with $f_x(u)=t_uxt_{u\psi(x)}^{-1}$ for all $u\in K$. Then $\varphi(x):=(f_x,\psi(x))$
defines an embedding $\varphi$ of $G$ into $N\wr K$. Furthermore, if $N$ acts faithfully on a set $\Delta$ then $G$ acts
faithfully on $\Delta\times K$ by the rule $(\delta,k)^{x}=(\delta^{f_x(k)},k\psi(x))$.
\end{theorem}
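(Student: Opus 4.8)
The plan is to prove this in two stages, following the classical Krasner--Kaloujnine argument: first that $\varphi$ is an injective homomorphism, and then that the resulting imprimitive action on $\Delta\times K$ is faithful. Throughout I would realise $N\wr K$ concretely as the semidirect product $\Fun(K,N)\rtimes K$, in which the top group $K$ acts on the base group $\Fun(K,N)$ by the shift $({}^{k}\!f)(u)=f(uk)$, so that multiplication reads $(f_1,k_1)(f_2,k_2)=(f_1\cdot{}^{k_1}\!f_2,\,k_1k_2)$, where $(f_1\cdot f_2)(u)=f_1(u)f_2(u)$ is the pointwise product. Pinning down this convention is the one genuine decision in the proof; everything else is forced.

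First I would check that $\varphi$ actually lands in $N\wr K$, i.e. that each $f_x$ takes values in $N$: applying $\psi$ to $f_x(u)=t_uxt_{u\psi(x)}^{-1}$ and using $\psi(t_w)=w$ gives $\psi(f_x(u))=u\,\psi(x)\,(u\psi(x))^{-1}=1$, so $f_x(u)\in\ker\psi=N$. The heart of the homomorphism property is then the cocycle identity
\[
f_{xy}(u)=f_x(u)\,f_y(u\psi(x)),
\]
which I would obtain by writing $f_{xy}(u)=t_u(xy)t_{u\psi(x)\psi(y)}^{-1}$, inserting $t_{u\psi(x)}^{-1}t_{u\psi(x)}$ between $x$ and $y$, and regrouping the two resulting blocks as $f_x(u)$ and $f_y(u\psi(x))$. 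Comparing this identity with the wreath multiplication rule, where $(f_x\cdot{}^{\psi(x)}\!f_y)(u)=f_x(u)f_y(u\psi(x))$, yields exactly $\varphi(x)\varphi(y)=(f_{xy},\psi(x)\psi(y))=\varphi(xy)$. Injectivity is immediate: if $\varphi(x)$ is trivial then $\psi(x)=1$, so $x\in N$ and $t_{\psi(x)}=t_1$, and the vanishing of the base component at $u=1$ gives $f_x(1)=t_1xt_1^{-1}=1$, whence $x=1$.

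For the second statement I would first record that the imprimitive rule $(\delta,k)^{(f,\ell)}=(\delta^{f(k)},k\ell)$ defines a genuine action of $N\wr K$ on $\Delta\times K$: applying $(f_1,k_1)$ then $(f_2,k_2)$ sends $(\delta,k)$ to $(\delta^{f_1(k)f_2(kk_1)},kk_1k_2)$, and since $(f_1\cdot{}^{k_1}\!f_2)(k)=f_1(k)f_2(kk_1)$ this agrees with the action of the single element $(f_1,k_1)(f_2,k_2)$. Pulling this action back along the embedding $\varphi$ produces precisely $(\delta,k)^x=(\delta^{f_x(k)},k\psi(x))$, the stated formula. Faithfulness then follows quickly: if $x$ fixes every $(\delta,k)$, comparing second coordinates forces $k\psi(x)=k$ for all $k$, hence $\psi(x)=1$ and $x\in N$; comparing first coordinates at $k=1$ gives $\delta^{f_x(1)}=\delta$ for all $\delta\in\Delta$, so $f_x(1)=t_1xt_1^{-1}$ fixes $\Delta$ pointwise, and faithfulness of $N$ on $\Delta$ forces $f_x(1)=1$, whence $x=1$.

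The work here is entirely bookkeeping, and the main obstacle is not mathematical depth but notational consistency: the right-coset convention, the direction of the shift $\,{}^{k}\!f$, and the left/right side on which $K$ acts must all be chosen so that the cocycle identity matches the wreath multiplication. Once these are fixed by the single requirement that $\varphi$ be a homomorphism, the remaining verifications reduce to the substitutions indicated above, and no further obstacle arises.
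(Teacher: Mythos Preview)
Your proof is correct and is the standard Krasner--Kaloujnine argument; all the verifications (that $f_x$ maps into $N$, the cocycle identity, injectivity, and faithfulness of the imprimitive action) go through as you describe. Note, however, that the paper does not give its own proof of this statement: it is quoted verbatim as \cite[Theorem 2.6 A]{Dixon} and used as a black box, so there is no in-paper argument to compare against.
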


 \section{The proof of Theorem A} 
In this section, we will prove that every finite soluble totally $2$-closed group is nilpotent and so it is a cyclic group of a direct product of 
a cyclic group of odd order with a generalized quaternion group. Using the following lemma, one can eliminate lots of candidates of totally $2$-closed groups:

\begin{lemma}\label{sd}
Let $G=HK$ be a totally $2$-closed group, where $H,K$ are proper subgroups of $G$.  If $H_G\cap K_G=1$, then $G=H_G\times K_G$ 
and both $H_G$ and $K_G$ are totally $2$-closed. In particular, if $H\cap K=1$, then $G=H\times K$ and  both $H$ and $K$ are totally $2$-closed
\end{lemma}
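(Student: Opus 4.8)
The plan is to realise $G$ faithfully on the disjoint union of the coset spaces of $H$ and of $K$, and then to apply Wielandt's Dissection Theorem (Theorem~\ref{DissectionTheorem6.5Wielandt}) together with the hypothesis that $G$ is totally $2$-closed.

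First I would set $\Omega_1$ to be the set of right cosets of $H$ in $G$, $\Omega_2$ the set of right cosets of $K$ in $G$, and $\Omega=\Omega_1\cup\Omega_2$ (disjoint union), with $G$ acting by right multiplication on each piece. The kernel of the action on $\Omega_1$ (resp.\ $\Omega_2$) is $H_G$ (resp.\ $K_G$), so the kernel on $\Omega$ is $H_G\cap K_G=1$; hence $G\le\Sym(\Omega)$ faithfully, and since $G$ is totally $2$-closed, $G^{(2),\Omega}=G$. Now put $\Gamma:=\Omega_2$ (a $G$-invariant subset) and $\Delta:=\Omega_1$. The stabiliser in $G$ of a coset $Hx\in\Delta$ is the conjugate $H^x$, and the stabiliser of $Ky\in\Gamma$ is $K^y$. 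The one genuinely non-formal point is the elementary observation that $G=HK=H\cdot1\cdot K$ forces $G$ to consist of a single $(H,K)$-double coset, so $G=HcK$ for every $c\in G$; conjugating this identity yields $G=K^{y}H^{x}$ for all $x,y\in G$, i.e.\ $G=G_\gamma G_\delta$ for all $\gamma\in\Gamma$, $\delta\in\Delta$. Thus condition~(2) of the Dissection Theorem holds, and we conclude $G^\Gamma\times G^\Delta\le G^{(2),\Omega}=G$.

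Conversely, since $G$ preserves the partition $\Omega=\Gamma\cup\Delta$, its image in $\Sym(\Omega)$ lies in $\Sym(\Gamma)\times\Sym(\Delta)$ with coordinate projections $G^\Gamma$ and $G^\Delta$, so $G\le G^\Gamma\times G^\Delta$. The two inclusions give $G=G^\Gamma\times G^\Delta$. Reading off this internal direct product, the kernel of the $\Gamma$-action is the second factor and the kernel of the $\Delta$-action is the first; that is, $K_G=1\times G^\Delta$ and $H_G=G^\Gamma\times 1$, whence $G=H_G\times K_G$. Applying Lemma~\ref{Lemma2.9arezoomand} to this direct decomposition of the totally $2$-closed group $G$ then shows that $H_G$ and $K_G$ are both totally $2$-closed.

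For the final sentence, if $H\cap K=1$ then $H_G\cap K_G\le H\cap K=1$, so the above applies and $G=H_G\times K_G$; comparing orders, $|H_G|\,|K_G|=|G|=|H|\,|K|$ (the last equality since $G=HK$ with $H\cap K=1$), and together with $H_G\le H$, $K_G\le K$ this forces $H_G=H$ and $K_G=K$. Hence $G=H\times K$ with both factors totally $2$-closed. I expect the whole argument to be routine once the double-coset fact is in hand; the only place where care is needed is matching up the point-stabiliser descriptions and the $G$-invariance assumptions with the exact form of the Dissection Theorem.
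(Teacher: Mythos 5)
Your proof is correct and follows essentially the same route as the paper's: act on the disjoint union of the two coset spaces, verify condition (2) of Wielandt's Dissection Theorem via the factorization $G=H^xK^y$ (which the paper simply cites from Isaacs, Problem 1A.4, while you derive it from the single-double-coset observation), and then use total $2$-closedness plus Lemma~\ref{Lemma2.9arezoomand} to conclude. The only additions are your explicit identification of the direct factors with $H_G$ and $K_G$ and the order count for the ``in particular'' clause, both of which the paper leaves implicit and both of which are correct.
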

\begin{proof}
Let $\Gamma$ and $\Delta$ be the set of right cosets of $H$ and $K$, respectively and $\Omega=\Gamma\cup\Delta$.
Then for all $x,y\in G$, we have $G=H^xK^y$ \cite[Problem 1A.4]{Isaacs}. Consider the actions of $G$ on $\Gamma$, $\Delta$ and
$\Omega$ by right multiplication. Since $H_G\cap K_G=1$, $G$ acts on $\Omega$ faithfully. Also,
 since $G_{Hx}=H^x$ and $G_{Ky}=K^y$ for all $x,y\in G$, the Wielandt's Dissection 
Theorem \cite[Dissection Theorem 5.6]{Wielandt} implies that $G/H_G\times G/K_G=G^\Gamma\times G^\Delta\leq G^{(2),\Omega}$.
Now since $G$ is
totally $2$-closed, $G^{(2),\Omega}=G$ and so  $G=H_GK_G$, which implies  that $G=H_G\times K_G$. 
Now \ref{Lemma2.9arezoomand} implies that $H_G$ and $K_G$ are both totally $2$-closed groups.
\end{proof}

In the following corollary, we determine the structure of normal Sylow subgroups of totally $2$-closed groups.
\begin{corollary}
Let $G$ be a finite totally $2$-closed group. Then every normal Sylow subgroup of $G$ is cyclic or a generalized quaternion group.
\end{corollary}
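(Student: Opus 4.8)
The plan is to deduce this from Theorem~\ref{Theorem2arezoomand} by showing that any non-trivial normal Sylow subgroup of $G$ is itself totally $2$-closed, and then invoking the classification of totally $2$-closed finite $p$-groups. So let $P\in\Syl_p(G)$ with $P\trianglelefteq G$. If $P=1$ there is nothing to prove. If $P=G$, then $G$ is a totally $2$-closed $p$-group, and Theorem~\ref{Theorem2arezoomand} immediately gives that $G=P$ is cyclic or a generalized quaternion group. Hence we may assume $1\neq P\neq G$.

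Since $P$ is a normal Sylow subgroup, $(|P|,[G:P])=1$, so $P$ is a normal Hall subgroup of $G$, and the Schur--Zassenhaus theorem furnishes a complement $H$; that is, $G=PH$ with $P\cap H=1$. Because $P\neq G$ we have $H\neq 1$, and of course $P\neq G$ as well, so both $P$ and $H$ are proper subgroups of $G$. We are therefore in a position to apply Lemma~\ref{sd} to the factorization $G=PH$ with $P\cap H=1$: the ``in particular'' clause of that lemma yields $G=P\times H$ and, crucially, that $P$ is totally $2$-closed.

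It remains to note that $P$ is a totally $2$-closed finite $p$-group, so Theorem~\ref{Theorem2arezoomand} (the nilpotent, in fact $p$-group, case) forces $P$ to be cyclic or a generalized quaternion group, the latter only possible when $p=2$. This completes the argument. The proof is short, and the only points that require genuine care are the degenerate cases $P\in\{1,G\}$ and the verification that Lemma~\ref{sd} applies — specifically that the complement $H$ is \emph{proper and non-trivial}, which holds precisely because $P$ is assumed to be a proper non-trivial Sylow subgroup. I anticipate no substantial obstacle beyond this bookkeeping; in particular, Schur--Zassenhaus is applied to a normal Hall subgroup, so its existence part suffices and no solubility hypothesis on $G$ is needed.
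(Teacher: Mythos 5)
Your proof is correct and follows essentially the same route as the paper: Schur--Zassenhaus applied to the normal Hall subgroup $P$ gives $G=P\rtimes H$, Lemma~\ref{sd} then yields that $P$ is totally $2$-closed, and Theorem~\ref{Theorem2arezoomand} finishes. Your explicit treatment of the degenerate cases $P=1$ and $P=G$ (where Lemma~\ref{sd} does not literally apply because one factor fails to be a proper subgroup) is a small but worthwhile piece of bookkeeping that the paper glosses over.
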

\begin{proof} Let $P$ be a normal Sylow subgroup of $G$. Then $G=P\rtimes H$ \cite[3.8 Theorem]{Isaacs}, for some subgroup $H$ of $G$.  Hence,
by Lemma \ref{sd} and Theorem \ref{Theorem2arezoomand}, $P$ is cyclic or a generalized quaternion group.
\end{proof}

\begin{proof}[Proof of Theorem A.]
One direction is clear by Theorem \ref{Theorem2arezoomand}. Conversely, suppose that $G$ be a finite soluble totally $2$-closed group.
Let $p$ be a prime divisor of $|G|$ and $P\in\Syl_p(G)$. Since $G$ is soluble, it has a Hall $p'$-subgroup $H$. Hence
$G=HP$, where $H\cap P=1$. Since $G$ is totally $2$-closed, Lemma \ref{sd} implies that $P\unlhd G$. This means that $G$ is a nilpotent
group. Now Theorem \ref{Theorem2arezoomand} implies that $G$ is cyclic or a direct product of a cyclic group of odd order with a generalized
quaternion group.\end{proof}

\begin{corollary} Let $G$ be a finite group of even order which has a cyclic Sylow $2$-subgroup. Then $G$ is totally
$2$-closed if and only if $G$ is cyclic. In particular, if $G$ is a group of order $2n$, where $n$ is odd, then
$G$ is totally $2$-closed if and only if $G$ is cyclic.
\end{corollary}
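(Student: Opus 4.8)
The plan is to reduce the statement to Theorem A by first producing a normal $2$-complement in $G$. Recall Burnside's normal $p$-complement theorem: if $P\in\Syl_p(G)$ satisfies $P\le Z(N_G(P))$, then $G$ has a normal $p$-complement. I would apply this with $p=2$. Since $P$ is cyclic of order $2^n$, its automorphism group has order $2^{n-1}$, so $N_G(P)/C_G(P)$, which embeds in $\Aut(P)$, is a $2$-group; on the other hand $P$ is abelian, so $P\le C_G(P)$, and $P$ is a Sylow $2$-subgroup of $N_G(P)$, hence also of $C_G(P)$. Comparing $2$-parts in $|N_G(P)|=[N_G(P):C_G(P)]\cdot|C_G(P)|$ forces $[N_G(P):C_G(P)]$ to be odd; being a power of $2$, it equals $1$, so $N_G(P)=C_G(P)$ and $P\le Z(N_G(P))$. (Equivalently, one may simply cite the classical fact that a finite group with cyclic Sylow $2$-subgroup has a normal $2$-complement.) Thus $G=N\rtimes P$ with $|N|$ odd.

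Next, since $|N|$ is odd, $N$ is soluble by the Feit--Thompson theorem, and therefore $G$, being an extension of the soluble group $N$ by the $2$-group $P$, is soluble. Assuming $G$ is totally $2$-closed, Theorem A then says $G$ is cyclic or a direct product of a cyclic group of odd order with a generalized quaternion group; in the latter case a Sylow $2$-subgroup equals the generalized quaternion factor, which is not cyclic, contradicting the hypothesis. Hence $G$ is cyclic. The converse is immediate: a finite cyclic group is totally $2$-closed by \cite{Monks}, and a cyclic group of even order plainly has cyclic Sylow $2$-subgroup.

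For the "in particular" clause, if $|G|=2n$ with $n$ odd then a Sylow $2$-subgroup of $G$ has order $2$, hence is cyclic, and the first part applies.

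The only genuinely nontrivial ingredients are the appeal to Burnside's theorem (or, more precisely, the verification that a cyclic Sylow $2$-subgroup lies in the centre of its normalizer) and the implicit use of Feit--Thompson; everything else is a direct invocation of Theorem A. I expect the Burnside step — although entirely routine — to be the one most worth spelling out, since without it there is no normal $2$-complement to feed into Lemma \ref{sd}/Theorem A. An alternative that avoids explicitly citing solubility would be to apply Lemma \ref{sd} directly to $G=NP$ with $N\cap P=1$, obtaining $G=N\times P$ with both factors totally $2$-closed, and then argue that a totally $2$-closed group of odd order is cyclic; but this route still needs Feit--Thompson together with Theorem A, so the argument above is the most economical.
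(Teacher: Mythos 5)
Your proof is correct and follows essentially the same route as the paper: both reduce to Theorem A by first extracting a normal $2$-complement from the cyclic Sylow $2$-subgroup and then invoking Feit--Thompson. The only (harmless) difference is that you conclude solubility of $G$ directly and apply Theorem A to $G$ itself, whereas the paper first splits $G=H\times P$ via Lemma \ref{sd} --- the alternative you yourself note at the end.
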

\begin{proof} Since the Sylow $2$-subgroup of $G$ is cyclic, it has a normal $2$-complement \cite[5.14 Corollary]{Isaacs}. Hence $G=H\rtimes P$, where $P\in\Syl_p(G)$. 
If $G$ is totally
$2$-closed, then Lemma \ref{sd}, implies that either $H=1$ or $G=H\times P$, where $H$ and $P$ are both totally $2$-closed. In the first
case there is nothing to prove and in the latter case, $G$ is cyclic by Theorem A and the Feit-Thompson odd-order Theorem.
\end{proof}

 \section{The proof of Theorem B}
By the Exercise 5.28 of Wielandt's book  \cite[Exercise 5.28]{Wielandt}, a finite group $G\leq\Sym(\Omega)$ is a $p$-group  if and only if 
$G^{(2),\Omega}$ is a $p$-group. First, for the completeness, we prove this exercise and generalize it to nilpotent groups.
 \begin{lemma}\label{ptrans}
Let $G\leq\Sym(\Omega)$ be a transitive $p$-group, $|\Omega|<\infty$. Then $G^{(2),\Omega}$ is a $p$-group.
\end{lemma}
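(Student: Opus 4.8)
The goal is to show that if $G \le \Sym(\Omega)$ is a transitive $p$-group with $|\Omega|<\infty$, then $G^{(2),\Omega}$ is again a $p$-group. The plan is to proceed by induction on $|\Omega|$, exploiting the fact that a transitive $p$-group admits a nontrivial block system: since $G$ is a nontrivial $p$-group it is nilpotent, so it has a normal subgroup $N$ of index $p$, and the orbits of $N$ on $\Omega$ form a block system $\mathcal{B}$ with $p$ blocks, each of size $|\Omega|/p$. I would then set $K := G^{(2),\Omega}$ and aim to show two things: first, that $K$ preserves the block system $\mathcal{B}$, so there is a homomorphism $\pi \colon K \to \Sym(\mathcal{B}) \cong \Sym(p)$; and second, that the image $\pi(K)$ and the kernel $\ker \pi$ are both $p$-groups, whence $K$ is a $p$-group.

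For the first point, the key observation is that a $2$-closure preserves every $G$-invariant binary relation, and the relation ``$\alpha$ and $\beta$ lie in the same $N$-orbit'' is a $G$-invariant equivalence relation on $\Omega$ (invariant because $N \unlhd G$); hence $K$ preserves it, i.e. $K$ preserves $\mathcal{B}$. For the image: $G$ induces a transitive $p$-group, namely $\mathbb{Z}/p$, on the $p$ blocks, so $\pi(G)$ has order $p$; but $\pi(G) \le \pi(K) \le \Sym(p)$, and since $G^{(2)}$ on the quotient action is contained in $\Sym(p)$ and must have the same orbits on $\mathcal{B} \times \mathcal{B}$ as $G$, a direct argument (or citing that the $2$-closure of a transitive group of prime degree $p$ acting as $\mathbb{Z}/p$ is again $\mathbb{Z}/p$, which is immediate since the orbital structure of $\mathbb{Z}/p$ on $p$ points forces the $2$-closure into the group generated by a $p$-cycle) shows $|\pi(K)| = p$. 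For the kernel: $K_1 := \ker \pi$ fixes each block setwise, so it acts on each block $B \in \mathcal{B}$; the restriction $K_1|_B$ is a subgroup of $\Sym(B)$ with $|B| < |\Omega|$. The crucial claim is that $K_1|_B$ has the same orbits on $B \times B$ as $G_B|_B$ (the setwise stabiliser of $B$ in $G$, restricted to $B$) — more precisely that $K_1|_B \le (G_B|_B)^{(2),B}$, which by induction is a $p$-group. Combined with the fact that $K_1$ embeds into the product $\prod_{B} K_1|_B$ over the $p$ blocks, this gives that $K_1$ is a $p$-group, completing the induction.

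The main obstacle is the kernel step: showing that an element $\theta \in K$ fixing every block setwise acts on each individual block $B$ as an element of the $2$-closure of $G_B$ restricted to $B$. This requires translating the defining ``patching'' property of $K = G^{(2),\Omega}$ (for every $\alpha,\beta$ there is $g \in G$ agreeing with $\theta$ on $\alpha,\beta$) into the analogous property inside a single block, and the subtlety is that the $g \in G$ witnessing agreement on two points $\alpha,\beta \in B$ need not lie in $G_B$ — one must use that $G_B$ is transitive on $B$ (as $N \le G_B$ and $B$ is an $N$-orbit) together with a suitable adjustment, or instead argue via the invariant relations on $\Omega$ that restrict to invariant relations on $B$. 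I would also need to check that $G_B|_B$ is indeed transitive on $B$ so that the inductive hypothesis applies, which is clear since $N \le G_B$ and $B$ is an $N$-orbit, and that $|B| = |\Omega|/p < |\Omega|$, which holds as $p \ge 2$; the base case $|\Omega| = 1$ is trivial since then $\Sym(\Omega)$ is itself trivial.

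Finally, once $\pi(K)$ has order $p$ and $\ker \pi$ is a $p$-group, $|K| = |\ker\pi| \cdot |\pi(K)|$ is a power of $p$, so $K = G^{(2),\Omega}$ is a $p$-group, as required. A cleaner alternative to the explicit block argument, if available, is to invoke Wielandt's relation-algebra framework directly: the orbitals of $G$ generate a coherent configuration, and for a transitive $p$-group one can exhibit enough invariant relations (for instance the orbit equivalence relations of the terms of a chief series of $G$) to pin $G^{(2),\Omega}$ down to a $p$-group; but I expect the inductive block argument above to be the most self-contained route given only the lemmas already available in the paper.
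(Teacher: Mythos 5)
Your proposal is correct in outline but takes a genuinely different route from the paper. The paper argues top--down: since $G$ is a transitive $p$-group, $|\Omega|=p^k$ and $G$ lies inside a Sylow $p$-subgroup $P$ of $\Sym(\Omega)$, which is permutation isomorphic to the iterated wreath product $C_p\wr\cdots\wr C_p$ in product action; Theorem \ref{Theorem5.1Cam-Giu} then shows that $P$ is $2$-closed, and monotonicity of the $2$-closure gives $G^{(2),\Omega}\le P^{(2),\Omega}=P$, a $p$-group. Your bottom--up induction on $|\Omega|$ via an invariant block system avoids the wreath-product machinery entirely and is more self-contained (it needs only the elementary facts that $2$-closures preserve $G$-invariant binary relations and that the $2$-closure of a regular $C_p$ is $C_p$), at the cost of having to control the image and the kernel of the block action separately.

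Two points in your sketch need attention, though both are easily repaired. First, an arbitrary normal subgroup $N$ of index $p$ need not be intransitive: if $G_\alpha\not\le N$ then $N$ is still transitive and your block system collapses to $\{\Omega\}$. You should instead take $N$ to be a maximal (hence normal, of index $p$) subgroup containing $G_\alpha$; then the $N$-orbits form a system of exactly $p$ blocks of size $|\Omega|/p$. Second, the step you flag as the ``main obstacle'' is in fact immediate and needs no adjustment: if $\theta\in\ker\pi$ and $\alpha,\beta\in B$, choose $g\in G$ with $\alpha^g=\alpha^\theta$ and $\beta^g=\beta^\theta$; since $\theta$ fixes $B$ setwise we have $\alpha^g\in B$, and since $g$ permutes the blocks, $B^g\cap B\neq\emptyset$ forces $B^g=B$. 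Thus $g$ lies in the setwise stabiliser $G_{\{B\}}$ (which here equals $N$) automatically, so $\theta|_B\in\bigl(G_{\{B\}}^{\,B}\bigr)^{(2),B}$, a $p$-group by the inductive hypothesis applied to the transitive $p$-group $G_{\{B\}}^{\,B}\le\Sym(B)$ with $|B|<|\Omega|$. With these two repairs your induction closes and the argument is complete.
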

\begin{proof}
Since $G$ is a transitive $p$-group, $|\Omega|=p^k$, for some integer $k\geq 1$. Also there
exists $P\in\Syl_p(\Sym(\Omega))$ such that $G\leq P$. Let 
$\Delta=\{\alpha_1,\ldots,\alpha_p\}$ be a subset of $\Omega$ of size $p$ and
$C=\langle (\alpha_1,\ldots,\alpha_p)\rangle$. Define recursively: $P_1=C$ acting on $\Delta$; and $P_m=P_{m-1}\wr_\Delta C$ acting
on $\Delta^m$ for $m\geq 2$. Then $P_m$ acts faithfully on $\Delta^m$. Now $P\leq\Sym(\Omega)$ is permutation isomorphic
to $x^{-1}P_{k}x\leq\Sym(\Delta^k)$ for some $x\in\Sym(\Delta^k)$, for more details see \cite[Example 2.6.1]{Dixon}, and so it
is permutation isomorphic to $P_k\leq\Sym(\Delta^k)$. Now Theorem \ref{Theorem5.1Cam-Giu} implies that $P_k^{(2)}$ is $2$-closed
on $\Delta^k$. Hence $P$ is $2$-closed on $\Omega$, by Lemma \ref{Lemma2.2arezoomand}. Thus $G^{(2),\Omega}\leq P$ is a $p$-group.
\end{proof}

\begin{corollary}\label{p-group} Let $G\leq\Sym(\Omega)$, $|\Omega|<\infty$ and $p$ be a prime. Then $G$ is a $p$-group if and only if $G^{(2),\Omega}$ is
a $p$-group. 
\end{corollary}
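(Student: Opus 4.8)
The plan is to reduce to the transitive case just established in Lemma \ref{ptrans} by decomposing $\Omega$ into $G$-orbits. One implication requires no work: since $G\le G^{(2),\Omega}$, if $G^{(2),\Omega}$ is a $p$-group then so is $G$.

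For the converse, suppose $G$ is a $p$-group. Write $\Omega=\Omega_1\cup\cdots\cup\Omega_r$ (a disjoint union) for the partition of $\Omega$ into $G$-orbits, and let $G_i\le\Sym(\Omega_i)$ be the transitive permutation group induced by $G$ on $\Omega_i$. Each $G_i$ is a homomorphic image of $G$, hence a transitive $p$-group, so $G_i^{(2),\Omega_i}$ is a $p$-group by Lemma \ref{ptrans}. The crux is to show that
\[
G^{(2),\Omega}\le G_1^{(2),\Omega_1}\times\cdots\times G_r^{(2),\Omega_r},
\]
where the right-hand side is regarded as the subgroup of $\Sym(\Omega)$ consisting of those permutations that fix each $\Omega_i$ setwise and induce on it an element of $G_i^{(2),\Omega_i}$. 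Once this is in hand, $G^{(2),\Omega}$ embeds in a direct product of $p$-groups and is therefore a $p$-group, which finishes the proof.

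To prove the displayed inclusion, take $\theta\in G^{(2),\Omega}$. Since $G$ and $G^{(2),\Omega}$ have the same orbits on $\Omega\times\Omega$, they have the same orbits on $\Omega$, so $\theta$ preserves each $\Omega_i$ setwise and restricts to a permutation $\theta_i$ of $\Omega_i$. As $\Omega_i$ is $G$-invariant, $G$ acts on $\Omega_i\times\Omega_i$ through $G_i$, so the $G$-orbits on $\Omega_i\times\Omega_i$ are exactly the $G_i$-orbits on $\Omega_i\times\Omega_i$; and $\theta$, preserving every $G$-orbit on $\Omega\times\Omega$, in particular preserves every $G_i$-orbit on $\Omega_i\times\Omega_i$. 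Hence $\theta_i\in G_i^{(2),\Omega_i}$ for every $i$, and $\theta=(\theta_1,\dots,\theta_r)$ lies in the stated product. I do not expect a genuine obstacle here: the argument is bookkeeping with orbits, and the only point to get right is that the $2$-closure respects the decomposition of $\Omega$ into $G$-orbits, which is precisely the easy one-sided inclusion above.
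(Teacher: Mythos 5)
Your proof is correct and follows essentially the same route as the paper: decompose $\Omega$ into $G$-orbits, show that $G^{(2),\Omega}$ embeds in the direct product of the $2$-closures of the induced transitive constituents, and apply Lemma \ref{ptrans} to each factor. The only cosmetic difference is that you verify the inclusion $G^{(2),\Omega}\le (G^{\Omega_1})^{(2),\Omega_1}\times\cdots\times(G^{\Omega_r})^{(2),\Omega_r}$ directly from the definition, whereas the paper splits it into two steps and cites Wielandt's Exercise 5.25 for the second.
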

\begin{proof} Let $G$ have $m$ orbits $\Omega_1,\ldots,\Omega_m$ on $\Omega$. Since the orbits of $G^{(2)}$ on $\Omega$ are the same orbits of $G$, we have
$G^{(2),\Omega}\leq G^{(2),\Omega_1}\times\cdots\times G^{(2),\Omega_m}$. On the other hand, by \cite[Exercise 5.25]{Wielandt}, 
for each $i$ we have
$G^{(2),\Omega_i}\leq (G^{\Omega_i})^{(2),\Omega_i}$, $i=1,\ldots,m$.  Since $G^{\Omega_i}$ is a transitive permutation $p$-group,
Lemma \ref{ptrans} implies that for each $i$, $(G^{\Omega_i})^{(2),\Omega_i}$ is a $p$-group. Hence $G^{(2),\Omega}$ is a $p$-group. The
converse direction is clear, and the proof is complete. 
\end{proof}

\begin{corollary}\label{O_p}
Let $P$ be a Sylow $p$-subgroup of a finite group $G\leq\Sym(\Omega)$. If $G^{(2),\Omega}=G$ then $P^{(2),\Omega}=P$ and 
$(O_p(G))^{(2),\Omega}=O_p(G)$.
\end{corollary}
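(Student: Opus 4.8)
The plan is to combine two ingredients already available: monotonicity of $2$-closure, namely that $H\le K\le\Sym(\Omega)$ implies $H^{(2),\Omega}\le K^{(2),\Omega}$; and the fact that the $2$-closure of a (not necessarily transitive) finite $p$-group is a $p$-group, which is exactly Corollary \ref{p-group}. Monotonicity is immediate from the description of $G^{(2),\Omega}$ recalled in the introduction: if $\theta\in H^{(2),\Omega}$, then for all $\alpha,\beta\in\Omega$ there is $h\in H$ with $\alpha^\theta=\alpha^h$ and $\beta^\theta=\beta^h$, and since $h\in H\le K$ the same $h$ witnesses $\theta\in K^{(2),\Omega}$. (If one prefers not to invoke this as a black box, this one line can simply be inserted into the argument.)

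First I would handle the Sylow subgroup. Applying monotonicity to $P\le G$ and using the hypothesis $G^{(2),\Omega}=G$ gives $P^{(2),\Omega}\le G^{(2),\Omega}=G$. Since $P$ is a $p$-group, Corollary \ref{p-group} shows $P^{(2),\Omega}$ is a $p$-group. Hence $P\le P^{(2),\Omega}\le G$ with $P^{(2),\Omega}$ a $p$-subgroup of $G$ containing the Sylow $p$-subgroup $P$; as $P$ is maximal among $p$-subgroups of $G$, this forces $P^{(2),\Omega}=P$.

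For $O_p(G)$ the same two steps give $(O_p(G))^{(2),\Omega}\le G$ and, via Corollary \ref{p-group}, that $(O_p(G))^{(2),\Omega}$ is a $p$-group, so $O_p(G)\le(O_p(G))^{(2),\Omega}\le G$. Here one additional ingredient is needed, since $O_p(G)$ is not maximal among $p$-subgroups of $G$: normality. Because $O_p(G)\unlhd G$, each $g\in G$ satisfies $g^{-1}O_p(G)g=O_p(G)$, so by Lemma \ref{Lemma2.3arezoomand} we get $g^{-1}(O_p(G))^{(2),\Omega}g=(g^{-1}O_p(G)g)^{(2),\Omega}=(O_p(G))^{(2),\Omega}$; thus $G$ normalises $(O_p(G))^{(2),\Omega}$. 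Combined with $(O_p(G))^{(2),\Omega}\le G$, this makes $(O_p(G))^{(2),\Omega}$ a normal $p$-subgroup of $G$, hence $(O_p(G))^{(2),\Omega}\le O_p(G)$, and equality follows.

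I do not expect a genuine obstacle in this argument; the only point requiring care is precisely the one just noted for $O_p(G)$, where the inclusion $(O_p(G))^{(2),\Omega}\le G$ together with the $p$-group property is not by itself enough, and one must additionally push through the $G$-normality of $(O_p(G))^{(2),\Omega}$ using Lemma \ref{Lemma2.3arezoomand}.
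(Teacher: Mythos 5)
Your proof is correct; the paper actually states this corollary without proof, and your argument (monotonicity of the $2$-closure plus Corollary \ref{p-group}, with Lemma \ref{Lemma2.3arezoomand} supplying normality of $(O_p(G))^{(2),\Omega}$ in $G$) is exactly the intended deduction. The only remark worth adding is that for $O_p(G)$ you could bypass Lemma \ref{Lemma2.3arezoomand} entirely: by monotonicity and the first part, $(O_p(G))^{(2),\Omega}\le P^{(2),\Omega}=P$ for every Sylow $p$-subgroup $P$, so $(O_p(G))^{(2),\Omega}$ lies in their intersection, which is $O_p(G)$ by the paper's own definition of $O_p(G)$.
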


\begin{corollary}\label{nilp2}
Let $G\leq\Sym(\Omega)$ and $|\Omega|<\infty$. Then $G$ is nilpotent if and only if $G^{(2),\Omega}$ is nilpotent.
\end{corollary}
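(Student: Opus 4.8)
The plan is to reduce the nilpotency statement to the prime-power case already handled in Corollary \ref{p-group}, using the fact that a finite group is nilpotent precisely when it is the direct product of its Sylow subgroups. First I would recall that, by Corollary \ref{O_p}, if $G=G^{(2),\Omega}$ then each Sylow $p$-subgroup $P$ of $G$ satisfies $P^{(2),\Omega}=P$; however, for this corollary I actually want to pass information in the other direction as well, so the cleaner route is to work orbit by orbit. Write $\Omega=\Omega_1\cup\cdots\cup\Omega_m$ for the $G$-orbits. Since $G^{(2),\Omega}$ has the same orbits as $G$, we have $G^{(2),\Omega}\leq G^{(2),\Omega_1}\times\cdots\times G^{(2),\Omega_m}$, and by \cite[Exercise 5.25]{Wielandt}, $G^{(2),\Omega_i}\leq (G^{\Omega_i})^{(2),\Omega_i}$ for each $i$. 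So it suffices to treat the transitive case: I would show that if $G\leq\Sym(\Omega)$ is transitive and nilpotent, then $G^{(2),\Omega}$ is nilpotent.

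For the transitive case, the key observation is that a transitive nilpotent group is the direct product $G=P_1\times\cdots\times P_r$ of its Sylow subgroups $P_i\in\Syl_{p_i}(G)$, and since the $|P_i|$ are pairwise coprime, Lemma \ref{Lemma4.5arezoomand} identifies the action of $G$ on $\Omega=\alpha^G$ with the action on $\Omega_1\times\cdots\times\Omega_r$, where $\Omega_i=\alpha^{P_i}$ and $P_i$ acts transitively on $\Omega_i$ (a straightforward induction extends Lemma \ref{Lemma4.5arezoomand} from two factors to $r$ factors). Now Theorem \ref{Theorem5.1Cam-Giu} applies to the direct-product action and gives
\[
G^{(2),\Omega}=(P_1\times\cdots\times P_r)^{(2),\Omega}=P_1^{(2),\Omega_1}\times\cdots\times P_r^{(2),\Omega_r}.
\]
By Lemma \ref{ptrans}, each $P_i^{(2),\Omega_i}$ is a $p_i$-group (with $|\Omega_i|$ a power of $p_i$), so $G^{(2),\Omega}$ is a direct product of groups of pairwise coprime prime-power orders, hence nilpotent. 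The converse direction is immediate since $G\leq G^{(2),\Omega}$ and subgroups of nilpotent groups are nilpotent.

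The main obstacle I anticipate is purely bookkeeping: making sure that Theorem \ref{Theorem5.1Cam-Giu}, which is stated for two transitive factors, legitimately iterates to $r$ factors, and that the multi-factor version of Lemma \ref{Lemma4.5arezoomand} genuinely produces the direct-product action rather than merely a transitive action with the right orbit lengths. Both are routine inductions — for the first, one groups $P_1\times\cdots\times P_r=P_1\times(P_2\times\cdots\times P_r)$ and notes that $P_2\times\cdots\times P_r$ remains transitive on $\Omega_2\times\cdots\times\Omega_r$ with order coprime to $|P_1|$; for the second, one checks that coprimality of $|P_1|$ with $|P_2\times\cdots\times P_r|$ is preserved at each stage — but they must be spelled out carefully so that the hypotheses of the cited results are actually met at every step. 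No genuinely new idea beyond Corollary \ref{p-group} is needed.
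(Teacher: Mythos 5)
Your proposal is correct and follows essentially the same route as the paper: reduce to the transitive case via the orbit decomposition and \cite[Exercise 5.25]{Wielandt}, then use Lemma \ref{Lemma4.5arezoomand} to identify the transitive action with the product action of $P_1\times\cdots\times P_r$ on $\Omega_1\times\cdots\times\Omega_r$, apply Theorem \ref{Theorem5.1Cam-Giu} to split the $2$-closure, and invoke the $p$-group result to conclude. Your explicit attention to iterating the two-factor statements to $r$ factors is a point the paper leaves implicit, but it is the same argument.
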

\begin{proof} Let $|G|=p_1^{n_1}\ldots p_r^{n_r}$, where $p_1<p_2<\cdots<p_r$ be primes and $n_1,\ldots,n_r\geq 1$ be integers. 
Let $P_i$ be the Sylow $p_i$-subgroup of $G$. Then $G=P_1\times\cdots\times P_r$.
First suppose that $G$ is transitive on $\Omega$ and $\alpha^G=\Omega$. Then Lemma \ref{Lemma4.5arezoomand} implies that 
the action of $G$ on $\Omega$ is  equivalent to the pointwise action of $G$ on $\Omega_1\times\cdots\times\Omega_r$, where 
$\Omega_i=\alpha^{P_i}$. Let $\Delta=\Omega_1\times\cdots\times\Omega_r$.
Since $P_i$ acts transitively on $\Omega_i$, Theorem \ref{Theorem5.1Cam-Giu} implies that, in the pointwise action on 
$\Delta$, $G^{(2)}=P_1^{(2)}\times\cdots\times P_r^{(2)}$. On the other hand, by Corollary \ref{p-group}, $P_i^{(2)}$ is
a $p_i$-group and so $P_i^{(2)}$ is a Sylow $p_i$-subgroup of $G^{(2)}$, which means that $G^{(2)}$ is a nilpotent group.

Now let $G$ have $m$ orbits $\Omega_1,\ldots,\Omega_m$ on $\Omega$. Then by a similar argument in the proof of Corollary \ref{p-group},
$G^{(2),\Omega}\leq (G^{\Omega_1})^{(2),\Omega_1}\times\cdots\times (G^{\Omega_m})^{(2),\Omega_m}$. Since for each $i$, 
$G^{\Omega_i}$ is a 
transitive nilpotent permutation group, the above argument follows that $G^{(2),\Omega}$ is nilpotent.\end{proof}

Next, we discuss the Universal Embedding Theorem. Let $G$ be a finite group, and $N$ a normal subgroup of $G$. Suppose that $N$ acts faithfully on a set $\Delta$. Let $\Gamma:=G/N$, and let $T$ be a right transversal for $N$ in $G$. Also, for $g=Ny\in \Gamma$, let $t_g$ be the unique element of $T$ such that $Ny=Nt_g$. Then $G$ acts faithfully on the finite set $\Omega:=\Delta\times \Gamma$ via the rule $(\delta,g)^x:=(\delta^{t_gxt_{g\psi(x)}^{-1}},g\psi(x))$, where $\psi:G\rightarrow \Gamma$ is a homomorphism of $G$ onto $K$ with kernel $N$. Let $\chi$ be the permutation character of $N$ acting on $\Delta$. Then the permutation character of $G$ acting on $\Omega$ is the induced character $\chi\uparrow^G_N$. Whence, Mackey's Theorem \cite[Proposition 6.20]{Mack} implies that the permutation character of $N$ on $\Omega$ is $\chi\uparrow^G_N\downarrow_N=\sum_{i=1}^m\chi_{N^{x_i}\cap N}\uparrow^N$, where $\{x_1,\hdots,x_m\}$ is a set of representatives for the $(N,N)$ double cosets in $G$. Since $N$ is normal in $G$, it follows that $N^{x_i}\cap N=N$, and in fact that $\{x_1,\hdots,x_m\}$ is a set of representatives for the right cosets of $N$ in $G$. Thus, we conclude that $\chi\uparrow^G_N\downarrow_N= m\chi$. That is, the permutation action of $N$ on $\Delta$ is permutation isomorphic to the natural action of $N$ on a disjoint union of $[G:N]$ copies of $\Delta$. In fact, this permutation isomorphism can be viewed as follows: the orbits of $N$ in its action on $\Omega$ are the sets $\Delta_g:=\{(\delta,g)\text{ : }\delta\in\Delta\}$, for $g\in \Gamma$. The permutation isomorphism $\theta_g:(N,\Delta_g)\rightarrow (N,\Delta)$ is given by $n\rightarrow t_gnt_g^{-1}$, $(\delta,g)\rightarrow \delta$.

Next, let $\mathcal{P}$ be a group theoretic property which is closed under normal extension. That is, if $H$ and $K$ are normal $\mathcal{P}$-subgroups of a finite group $G$, then $HK$ is a (necessarily) normal $\mathcal{P}$-subgroup of $G$. Thus, we can define the largest normal $\mathcal{P}$-subgroup of any finite group $G$: we denote this subgroup by $O_{\mathcal{P}}(G)$. Examples of group theoretic properties which are closed under normal extension include nilpotency and solubility.

\begin{lemma}\label{Second}
 Let $\mathcal{P}$ be a group theoretic property which is closed under normal extension with the property that whenever $\Omega$ is a finite set with $X\le \Sym(\Omega)$ a $\mathcal{P}$-subgroup, we have that $X^{(2),\Omega}$ has property $\mathcal{P}$. Suppose that $G\le \Sym(\Omega)$ is $2$-closed. Then $O_{\mathcal{P}}(G)$ is $2$-closed.
 \end{lemma}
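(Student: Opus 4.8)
The plan is to show directly that $N:=O_{\mathcal{P}}(G)$ is $2$-closed in its action on $\Omega$, i.e.\ that $N^{(2),\Omega}=N$. The inclusion $N\le N^{(2),\Omega}$ is automatic, so the content lies in the reverse inclusion, and the strategy is to verify that $N^{(2),\Omega}$ is again a normal $\mathcal{P}$-subgroup of $G$: maximality of $O_{\mathcal{P}}(G)$ among normal $\mathcal{P}$-subgroups of $G$ then forces $N^{(2),\Omega}\le O_{\mathcal{P}}(G)=N$, and we are done.

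To run this, I would assemble three facts. First, $N$ is a $\mathcal{P}$-group acting faithfully on the finite set $\Omega$, so the standing hypothesis on $\mathcal{P}$ applies to the pair $(N,\Omega)$ and gives that $N^{(2),\Omega}$ has property $\mathcal{P}$. Second, since $N\unlhd G$ we have $G\le N_{\Sym(\Omega)}(N)$, and Lemma~\ref{Lemma2.3arezoomand} upgrades this to $G\le N_{\Sym(\Omega)}(N^{(2),\Omega})$, so $N^{(2),\Omega}$ is normalised by $G$. Third, $N^{(2),\Omega}\le G$: since $N\le G$, every orbit of $G$ on $\Omega\times\Omega$ is a union of orbits of $N$ on $\Omega\times\Omega$, and $N^{(2),\Omega}$ leaves every $N$-orbit on $\Omega\times\Omega$ invariant, hence leaves every $G$-orbit invariant, so $N^{(2),\Omega}\le G^{(2),\Omega}=G$ because $G$ is $2$-closed. (Equivalently, using Wielandt's description recalled in the introduction: if $\theta\in N^{(2),\Omega}$ then for all $\alpha,\beta\in\Omega$ there is $n\in N\le G$ with $\alpha^\theta=\alpha^n$ and $\beta^\theta=\beta^n$, so $\theta\in G^{(2),\Omega}=G$.)

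Putting the pieces together, $N^{(2),\Omega}$ is a subgroup of $G$, normal in $G$, and has property $\mathcal{P}$; hence $N^{(2),\Omega}\le O_{\mathcal{P}}(G)=N$, and with $N\le N^{(2),\Omega}$ this yields $N^{(2),\Omega}=N$, as required. I do not expect a genuine obstacle: the proof is a short combination of the hypothesis on $\mathcal{P}$, Lemma~\ref{Lemma2.3arezoomand}, and monotonicity of the $2$-closure under inclusion of groups in $\Sym(\Omega)$. The one spot deserving a little care is this last point, where one must invoke $N\le G$ to see that the $N$-orbits on $\Omega\times\Omega$ refine the $G$-orbits, so that $N^{(2),\Omega}$ preserving the former forces it to preserve the latter. (In the application to Theorem B one takes $\mathcal{P}$ to be nilpotency, so that $O_{\mathcal{P}}(G)=F(G)$, and feeds in the Universal Embedding Theorem together with the Mackey/disjoint-union remark preceding this lemma and Lemma~\ref{Lemma2.9arezoomand}.)
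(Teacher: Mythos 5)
Your proposal is correct and follows essentially the same route as the paper: apply the hypothesis on $\mathcal{P}$ to get that $O_{\mathcal{P}}(G)^{(2),\Omega}$ is a $\mathcal{P}$-group, use Lemma~\ref{Lemma2.3arezoomand} plus monotonicity of the $2$-closure to see it is a normal subgroup of $G=G^{(2),\Omega}$, and conclude by maximality of $O_{\mathcal{P}}(G)$. Your write-up is slightly more careful than the paper's in explicitly justifying the containment $O_{\mathcal{P}}(G)^{(2),\Omega}\le G$, which the paper leaves implicit.
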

\begin{proof}
 By Lemma \ref{Lemma2.3arezoomand}, the group $O_{\mathcal{P}}(G)^{(2),\Omega}$ is normal in $G=G^{(2),\Omega}$. The hypothesis on $\mathcal{P}$ then guarantees that $O_{\mathcal{P}}(G)^{(2),\Omega}$ is $\mathcal{P}$, and hence that $O_{\mathcal{P}}(G)^{(2),\Omega}\le O_{\mathcal{P}}(G^{(2),\Omega})=O_{\mathcal{P}}(G)$. The result follows.
\end{proof}

\begin{proposition}\label{FTheorem}
 Let $G$ be a finite totally $2$-closed group. Then the Fitting subgroup $F(G)$ of $G$ is totally $2$-closed.
 \end{proposition}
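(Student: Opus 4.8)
The plan is to combine the Universal Embedding Theorem with Lemma~\ref{Second}. Write $N:=F(G)$ and assume $N\neq 1$. Fix an arbitrary faithful action of $N$ on a finite set $\Delta$; we must show $N^{(2),\Delta}=N$. First I would apply Theorem~\ref{universal} with $K:=G/N$: this gives a faithful action of $G$ on $\Omega:=\Delta\times K$. Since $G$ is totally $2$-closed, $G^{(2),\Omega}=G$.

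Next I would invoke Lemma~\ref{Second} with $\mathcal{P}$ the property of being nilpotent. Its hypotheses all hold: nilpotency is closed under normal extension by Fitting's theorem; a nilpotent finite permutation group has nilpotent $2$-closure by Corollary~\ref{nilp2}; and $O_{\mathcal{P}}(G)=F(G)=N$ by definition of the Fitting subgroup. Hence Lemma~\ref{Second} yields $N^{(2),\Omega}=N$.

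It then remains to descend from $\Omega$ to $\Delta$. By the discussion preceding Lemma~\ref{Second}, the action of $N$ on $\Omega$ is permutation isomorphic to the action of $N$ on a disjoint union $\Sigma$ of $m:=[G:N]$ copies of $\Delta$; so $N$ is $2$-closed on $\Sigma$, and it suffices to prove the general fact that any $Y\le\Sym(\Delta)$ which is $2$-closed in its diagonal action on $m$ disjoint copies of $\Delta$ is already $2$-closed on $\Delta$. For this, take $\theta\in Y^{(2),\Delta}$ and let $\hat\theta\in\Sym(\Sigma)$ act as $\theta$ on every copy. Given $p=(\alpha,i)$ and $q=(\beta,j)$ in $\Sigma$, the characterization of the $2$-closure recalled in the Introduction produces $y\in Y$ with $\alpha^\theta=\alpha^y$ and $\beta^\theta=\beta^y$, whence $p^{\hat\theta}=p^y$ and $q^{\hat\theta}=q^y$; thus $\hat\theta\in Y^{(2),\Sigma}=Y$, and since $Y$ acts diagonally this forces $\theta\in Y$. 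Therefore $N^{(2),\Delta}=N$, so $N=F(G)$ is totally $2$-closed. (Combined with Theorem~\ref{Theorem2arezoomand}, which applies because $F(G)$ is nilpotent, this also delivers the explicit form of $F(G)$ asserted in Theorem B.)

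I expect the delicate point to be the descent step, specifically the identification of $(N,\Omega)$ with $N$ acting on \emph{standard} copies of $\Delta$: the isomorphisms $\theta_g$ from the preceding discussion act on group elements by the conjugation $n\mapsto t_g n t_g^{-1}$, so one must check that this genuinely yields the usual diagonal action on $m$ copies of the $N$-set $\Delta$, rather than copies on which $N$ acts through twists that need not be realized by elements of $\Sym(\Delta)$; the short diagonal argument above uses that the copies are untwisted, so if a twist could survive one would instead need to argue with the $G$-action on $\Omega$ (transporting the relevant permutations from the identity sheet to the others by the coset representatives $t_g\in G$). The remaining ingredients are a routine assembly of the cited results.
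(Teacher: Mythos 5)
Your strategy is the paper's, step for step: embed $G$ into $\Sym(\Omega)$ with $\Omega=\Delta\times G/N$ via Theorem~\ref{universal}, apply Lemma~\ref{Second} with $\mathcal{P}=$ nilpotency together with Corollary~\ref{nilp2} to get $N^{(2),\Omega}=N$, and then descend to $\Delta$ by a diagonal argument. The first two steps are correct and identical to the paper's. The problem is the descent, and the worry you flag at the end is not a side remark --- it is the actual gap. Restricting the universal-embedding action to $N$ sends $n$ to the permutation $(\delta,g)\mapsto(\delta^{t_gnt_g^{-1}},g)$, so the stabiliser of $(\delta,g)$ in $N$ is $t_g^{-1}N_\delta t_g$. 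Since $t_g$ only normalises $N$, this subgroup need not be conjugate \emph{in} $N$ to $N_\delta$; each sheet $\Delta_g$ is isomorphic to $\Delta$ only as an $N$-set twisted by the automorphism $n\mapsto t_gnt_g^{-1}$, and these twists differ from sheet to sheet. Hence the image of $N$ in $\Sym(\Omega)$ is a twisted diagonal subgroup, not the strict diagonal $\{\hat y:y\in Y\}$ to which your (correct) general fact applies, and the two subgroups of $\Sym(\Omega)$ need not even be permutation isomorphic for an arbitrary faithful $N$-set $\Delta$. Concretely, your verification breaks for a pair $p=(\alpha,g_1)$, $q=(\beta,g_2)$ with $g_1\neq g_2$: knowing $y\in N$ with $\alpha^\theta=\alpha^y$ and $\beta^\theta=\beta^y$, you need a single $n\in N$ with $t_{g_1}nt_{g_1}^{-1}$ matching $y$ at $\alpha$ and $t_{g_2}nt_{g_2}^{-1}$ matching $y$ at $\beta$; this asks for a common point of a coset of $t_{g_1}^{-1}N_\alpha t_{g_1}$ and a coset of $t_{g_2}^{-1}N_\beta t_{g_2}$, which there is no reason to find. (When all the twists are trivial, $n=y$ works, which is exactly why the untwisted argument succeeds.)

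The paper runs the same diagonal computation but in ``twisted coordinates'': it fixes permutation isomorphisms $\mu_g$ between the $2$-closures $F^{(2),\Delta_1}$ and $F^{(2),\Delta_g}$ on the sheets, lets $F^{(2),\Delta}$ act on $\Omega$ by $(\delta,g)^z=(\delta^{\mu_g(z)},g)$, and crucially asserts that the restriction of this action to the natural copy $\wt{F}$ of $F$ is permutation isomorphic to the universal-embedding action of $F$ on $\Omega$ --- that is what allows it to conclude $\wt{F}'^{(2),\Omega}=\wt{F}'$ and hence trap the diagonal copy of $F^{(2),\Delta}$ inside $\wt{F}'$. The input behind that identification is the Mackey/permutation-character computation preceding Lemma~\ref{Second}, which asserts that $(N,\Omega)$ is permutation isomorphic to $[G:N]$ disjoint copies of $(N,\Delta)$. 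So your fallback (``transport by the $t_g$'') is indeed the intended route, but it is not a routine assembly: to close the argument you must actually exhibit, for each $g$, a bijection of $\Delta$ intertwining the action of $n$ with that of $t_gnt_g^{-1}$ (equivalently, show the point-stabiliser data of $\Delta$ is $G$-invariant up to $N$-conjugacy), and this is precisely the content of the permutation-isomorphism claim you would need to prove or import. Note that this is also the thinnest point of the paper's own proof --- the Mackey step quietly identifies the conjugate characters $\chi^{x_i}$ with $\chi$ --- so you have correctly located the crux, but as written your proposal does not supply the missing piece.
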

\begin{proof} 
Let $F:=F(G)$ and let $\Delta$ be a set on which $F$ acts faithfully. Let $\Gamma:=G/F$ and $\Omega:=\Delta\times\Gamma$. Then $G$ acts faithfully on $\Omega$ by the Universal Embedding Theorem. With this embedding, we have that $F(G)^{(2),\Omega}=F(G)$, by Lemma \ref{Second} and Corollary \ref{nilp2}.

Now, for each $g\in \Gamma$, let $F_g:=F^{(2),\Delta_g}\le\Sym(\Delta_g)$, and let $\mu_g:F_1\rightarrow F_g$ be a permutation isomorphism (see the paragraph above for an explanation of this notation). Then $F_1$ acts faithfully on $\Omega$ by the rule $(\delta,g)^z=(\delta^{\mu_g(z)},g)$, for $\delta\in\Delta$, $g\in \Gamma$, and $z\in F_1$. Furthermore, the natural copy $\wt{F}$ of $F$ in $F_1$ acts faithfully on $\Omega$ via restriction, and by the paragraph above this action is permutation isomorphic to the action of $F$ on $\Omega$ coming from the Universal Embedding Theorem. 
Denote by  $\wt{F}'$ and $F_1'$ the images of $\wt{F}$ and $F_1$ in $\Sym(\Omega)$ under this embedding. Since $(\wt{F}',\Omega)$ is permutation isomorphic to $(F,\Omega)$, we have that $\wt{F}'^{(2),\Omega}=\wt{F}'$.

Now, fix $z\in F_1'$. Then by definition of $F_1'$ we have that for all $(\delta_1,\delta_2)\in\Delta\times\Delta$ there exists $f\in \wt{F}'$ such that $(\delta_1,\delta_2)^z=(\delta_1,\delta_2)^f$. Hence, for all $((\delta_1,g_1),(\delta_2,g_2))\in \Omega\times\Omega$, we have
$$((\delta_1,g_1),(\delta_1,g_2))^z=((\delta_1^{\mu_{g_1}(z)},g_1),(\delta_2^{\mu_{g_2}(z)},g_2))=((\delta_1^{\mu_{g_1}(f)},g_1),(\delta_2^{\mu_{g_2}(f)},g_2))=((\delta_1,g_1),(\delta_2,g_2))^f.$$
Hence, $z\in \wt{F}'^{(2),\Omega}=\wt{F}'$. Whence, $F_1'\le \wt{F}'$. Since $|F_1|=|F_1'|\le |F|\le |F_1|$, so we have $|F|=|F_1|$ and hence $F=F^{\Delta}$, as needed.
\end{proof} 

The proof of Proposition \ref{FTheorem} can be adapted to prove that the centraliser in $G$ of any normal subgroup of a totally $2$-closed group is totally $2$-closed.
\begin{proposition}\label{CTheorem} 
Let $G$ be a finite totally $2$-closed group, and let $N$ be a normal subgroup of $G$. Then $C_G(N)$ is totally $2$-closed.
\end{proposition}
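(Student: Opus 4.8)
The plan is to mimic the proof of Proposition~\ref{FTheorem} almost verbatim, replacing the Fitting subgroup $F(G)$ with $C:=C_G(N)$ throughout. The only genuinely new ingredient needed is an analogue of the closure statement ``if $X\le\Sym(\Omega)$ has property $\mathcal P$ then $X^{(2),\Omega}$ has property $\mathcal P$'' that was provided by Corollary~\ref{nilp2} in the Fitting case: here the relevant fact is the last clause of Lemma~\ref{Lemma2.1arezoomand}, namely that $(C_G(N))^{(2),\Omega}\le C_{G^{(2),\Omega}}(N^{(2),\Omega})$ whenever $N\le G\le\Sym(\Omega)$. First I would set up the embedding exactly as before: let $\Delta$ be a faithful $C$-set, put $\Gamma:=G/C$, $\Omega:=\Delta\times\Gamma$, and use the Universal Embedding Theorem (Theorem~\ref{universal}) to make $G$ act faithfully on $\Omega$. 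Since $G$ is totally $2$-closed, $G^{(2),\Omega}=G$. Applying Lemma~\ref{Lemma2.1arezoomand} with the normal subgroup $N\le G\le\Sym(\Omega)$ gives $C^{(2),\Omega}=(C_G(N))^{(2),\Omega}\le C_{G^{(2),\Omega}}(N^{(2),\Omega})\le C_{G}(N)=C$, so in fact $C^{(2),\Omega}=C$; this is the exact counterpart of the opening sentence of the proof of Proposition~\ref{FTheorem}.

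From that point the argument is structurally identical to Proposition~\ref{FTheorem}. For each $g\in\Gamma$ let $C_g:=C^{(2),\Delta_g}\le\Sym(\Delta_g)$ where $\Delta_g=\{(\delta,g):\delta\in\Delta\}$ is the $N$-free description of the $C$-orbits on $\Omega$ coming from the paragraph preceding Lemma~\ref{Second} — wait, there $N$ denotes the normal subgroup used in the embedding (here that subgroup is $C$ itself, playing the role that $F$ played before), so the orbit/permutation-isomorphism discussion applies directly with $C$ in place of $F$. Pick permutation isomorphisms $\mu_g:C_1\to C_g$, let $C_1$ act on $\Omega$ by $(\delta,g)^z=(\delta^{\mu_g(z)},g)$, let $\wt C$ be the natural copy of $C$ inside $C_1$, and let $\wt C',C_1'$ be their images in $\Sym(\Omega)$. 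As in the earlier proof, $(\wt C',\Omega)$ is permutation isomorphic to $(C,\Omega)$, so $\wt C'^{(2),\Omega}=\wt C'$. Then for any $z\in C_1'$ and any $(\delta_1,\delta_2)\in\Delta\times\Delta$ there is $f\in\wt C'$ with $(\delta_1,\delta_2)^z=(\delta_1,\delta_2)^f$, and the same coordinate-wise computation displayed in the proof of Proposition~\ref{FTheorem} shows $z$ acts on every pair in $\Omega\times\Omega$ like $f$, so $z\in\wt C'^{(2),\Omega}=\wt C'$. Hence $C_1'\le\wt C'$, and comparing orders $|C_1|=|C_1'|\le|C|\le|C_1|$ forces $C=C^{\Delta}$, i.e. $C$ is $2$-closed on the arbitrary faithful set $\Delta$. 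Since $\Delta$ was arbitrary, $C_G(N)$ is totally $2$-closed.

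The main point to get right — and the only place where the proof is not a pure copy of Proposition~\ref{FTheorem} — is the first reduction step: one must check that Lemma~\ref{Lemma2.1arezoomand} genuinely applies in the embedded situation, i.e. that $N$, viewed as a subgroup of $G\le\Sym(\Omega)$, is the same $N$ whose centralizer we want, and that $C_{G}(N)$ computed abstractly coincides with $C_{G}(N)$ computed inside $\Sym(\Omega)$ (true because $G$ is faithful on $\Omega$). Everything downstream of that — the orbit decomposition of $C$ on $\Omega$, the permutation isomorphisms $\mu_g$, the pair-by-pair verification — is word-for-word the argument already given, with every occurrence of $F$ replaced by $C$; no new obstacle arises there. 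So in writing this up I would state it as ``the proof of Proposition~\ref{FTheorem} goes through verbatim, using the final assertion of Lemma~\ref{Lemma2.1arezoomand} in place of Corollary~\ref{nilp2} to obtain $C_G(N)^{(2),\Omega}=C_G(N)$ at the outset,'' and then only spell out that first step in detail.
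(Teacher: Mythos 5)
Your proposal is correct and matches the paper's own proof: the paper likewise embeds $G$ into $\Sym(\Delta\times G/C)$ via the Universal Embedding Theorem, deduces $C^{(2),\Omega}=C$ from the last clause of Lemma~\ref{Lemma2.1arezoomand}, and then invokes the second paragraph of the proof of Proposition~\ref{FTheorem} verbatim. Your write-up simply spells out the details the paper leaves implicit.
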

\begin{proof}
 Let $C:=C_G(N)$ and let $\Delta$ be a set on which $C$ acts faithfully. Let $\Gamma:=G/C$ and $\Omega:=\Delta\times\Gamma$. Then $G$ acts faithfully on $\Omega$ by the Universal Embedding Theorem. Also, $C^{(2),\Omega}=C$ by Lemma \ref{Lemma2.1arezoomand}. The result now follows as in the second paragraph of the proof of Proposition \ref{FTheorem} above.
 \end{proof} 

\begin{proof}[Proof of Theorem B] Let $G$ be a totally $2$-closed. Then $F(G)$ is a totally $2$-closed group by Proposition \ref{FTheorem}.
Since, by \cite[1.28 Corollary]{Isaacs}, $F(G)$ is nilpotent, Theorem \ref{Theorem2arezoomand} implies that $F(G)$ is cylic or a direct product of a generalized quaternion group
with a cyclic group of odd order, as desired.\end{proof}

\section{The proof of Theorem C}
The purpose of this section is to prove Theorem C.

\begin{proof}[Proof of Theorem C]
Suppose that $G$ is a finite group of minimal order with the property that $G$ is insoluble, totally $2$-closed group, and $F:=F(G)=1$.
Write $F^*:=F^*(G)=F\circ E(G)$ for the generalized Fitting subgroup of $G$, where $E(G)$ denotes the layer of $G$.  

If $F\neq Z(G)$ then, by Proposition \ref{CTheorem}, 
$C_G(F)$ is a totally $2$-closed group  of order strictly smaller than $|G|$. Further, $C_G(F)$ contains $Z(F)$, which is non-trivial since $F$ is non-trivial. Thus, $C_G(F)$ is a finite totally $2$-closed group with non-trivial Fitting subgroup, and $|C_G(F)|<|G|$. Hence, $C_G(F)$ is soluble by hypothesis. It follows from Theorem A that $C_G(F)$ is nilpotent, so $C_G(F)\leq F$. 
Hence $F=F^*$  by \cite[9.9 Corollary]{Isaacs}. 
It follows from Theorem B that $F^*$ is either cyclic or a direct product of a
cyclic group of odd order with a generalized quaternion group. Thus, $G/Z(F^*)\leq\Aut(F^*)$ is soluble. This contradicts our choice of $G$.

So we must have that $F=Z(G)$. It follows that $F^*(G)$ has shape $Z(G)\circ T_1\circ\hdots\circ T_s$, where $T_i \unlhd G$ is a central product of (say) $t_i$ copies of a quasisimple group, and these $t_i$ copies are permuted transitively by $G$. If $i>1$, then $C_G(T_i)$ is a finite insoluble totally $2$-closed group with non-trivial Fitting subgroup by Proposition
\ref{CTheorem}. However, since $|C_G(T_i)|<|G|$, this contradicts our choice of $G$. Thus, we must have $i=1$, whence $F^*(G)$ has shape $Z(G) \circ T$, where $T\unlhd G$ is a central product of (say) $t$ copies of a finite quasimple group $S$ permuted transitively by $G$, and $Z(T)\le Z(G)$. In particular, by Theorem \ref{Theorem1arezoomand}, $Z(T)$ is cyclic.

We claim that $T/Z(T)=TZ(G)/Z(G)\cong (S/Z(S))^t$ is the unique minimal normal subgroup of $G/Z(G)$. Indeed, if $M/Z(G)$ is a minimal normal subgroup of $G/Z(G)$ with $M\neq TZ(G)$, then $M/Z(G)$ must be nonabelian (otherwise, $M/Z(G)$, and hence $M$, is nilpotent, so $M\le F(G)=Z(G)$). Write $M/Z(G)=S_1/Z(G)\times S_2/Z(G)\times\hdots\times S_e/Z(G)$, where the groups $S_i/Z(G)$ are nonabelian simple. Then choose $R_i\le S_i$ minimal with the property that $R_iZ(G)=S_i$. Then $Z(G)\cap R_i\le \Phi(R_i)$, and $R_i/R_i\cap Z(G)$ is simple. Thus, $Z(R_i)=R_i\cap Z(G)$, so $R_i/Z(R_i)$ is a nonabelian simple group, and it follows that $R_i$ is quasisimple. Since $R_i$ is subnormal in $G$, it follows that the group $R=\langle R_1,\hdots,R_e\rangle$ 
is contained in the layer $E(G)$ of $G$. Since $E(G)=T$, we then have $R\leq T$, so $M/Z(G)=RZ(G)/Z(G)=TZ(G)/Z(G)$. Thus, $M=TZ(G)$, so $TZ(G)/Z(G)$ is the unique minimal normal subgroup of $G/Z(G)$, as claimed. This completes the proof.\end{proof}



\begin{thebibliography}{Dillo 150}

\bibitem{arezoomand} A. Abdollahi and M. Arezoomand, Finite nilpotent groups that coincide  with their 2-closures in all of
their faithful permutation representations, J. Algebra Appl., 17(4) 2018, 1850065.

\bibitem{AAS} A. Arezoomand, A. Abdollahi and P. Spiga, On problems concerning fixed-point-free permutations and on
the polycirculant conjecture-a survey, Trans. Comb., 8(1) (2019) 15-40.

\bibitem{AIPT} M. Arezoomand, M.A. Iranmanesh, C. E. Praeger, and G. Tracey, Totally $2$-closed finite groups with trivial Fitting subgroup, arxiv preprint, arxiv:2111.02253, 2021.

\bibitem{Cam-Giu} P. J. Cameron, M.  Giudici, G. A. Jones,
W. M. Kantor, M. H. Klin, D. Maru\v{s}i\v{c},
and L. A. Nowitz, Transitive permutation groups without semiregular subgroups, J. London Math. Soc. 66 (2) (2002),325-333.

\bibitem{Dixon} J. D. Dixon and B. Mortimer, Permutation groups, Springer, New York, 1996.
\bibitem{Mack} K. Doerk, K. and T. Hawkes. {Finite soluble groups}, de Gruyter, Berlin, 1992.
\bibitem{Dobson} E. Dobson and I. Kov\'acs, Automorphism groups of Cayley digraphs of $\Bbb Z_p^3$, Electron. J. Combin.,
16(1),(2009), \# P149.
\bibitem{Faradzev} I. A. Farad\v{z}ev, M. H. Klin and M. E. Muzichuk,  Investigations in Algebraic Theory of Combinatorial Objects,
Vol. 84 of the series Mathematics and Its Applications, (1994) pp 1-152.
\bibitem{Isaacs} I. M. Isaacs, Finite group theory, Graduate Studies in Mathematics Vol. 92, American Mathematical Society, 2008.
\bibitem{Liebeck0} M. W. Liebeck, C. E. Praeger and J. Saxl, A classification of maximal subgroups of the finite alternating and symmetric
groups, J. Algebra 111(2) (1987) 365-383.
\bibitem{Liebeck1} M. W. Liebeck, C. E. Praeger and J. Saxl, On the $2$-closures of finite permutation groups, J. London Math. Soc.
37(2) (1998) 241-252.
\bibitem{Monks} K. M. Monks, The mobius number of the symmetric groups, PhD Thesis, Colorado State University, 2012.
\bibitem{Onan} M. E. O'Nan, Estimation of Sylow subgroups in primitive permutation groups, Math. Z. 147 (1976) 101-111.
\bibitem{Ponomarenko1} I. N. Ponomarenko, Graph isomorphism problem and $2$-closed permutation groups, Appl. Algebra Engrg. Comm. Comput., 5(1) (1994) 9-22.
\bibitem{Ponomarenko2} S. Evdokimov and I. Ponomarenko, Two-closure of odd permutation group
in polynomial time, Discrete Math. 235 (2001) 221-232.
\bibitem{PV} I. Ponomarenko, A. Vasil'ev, Two-closure of supersolvable permutation group in polynomial time, Comput. Complex, 29(5) (2020) https://doi.org/10.1007/s00037-020-00195-7.
\bibitem{Praeger1} C. E. Praeger, On elements of prime order in primitive permutation groups, J. Algebra 60 (1979) 126-157.
\bibitem{Praeger2} C. E. Praeger and J. Saxl, Closures of finite primitive permutation groups, Bull. London Math. Soc. 24 (1992) 251-258.

\bibitem{VC} A. V. Vasil'ev and D. V. Churikov, $2$-closures of $\frac{3}{2}$-transitive groups in polynomial time, Sib. Math. J. 60(2) (2019) 279-290.

\bibitem{Wielandt} H. W. Wielandt, `Permutation groups through invariant relations and invariant
functions', Lecture Notes, Ohio State University, 1969. Also published in: Wielandt, Helmut, Mathematische
Werke (Mathematical works) Vol. 1. Group theory. Walter de Gruyter \& Co., Berlin, 1994, pp. 237-296.

\bibitem{Xu} J. Xu, M. Giudici, C.H. Li and C. Praeger, Invariant relations and Aschbacher classes of finite linear groups, Electron. J. Combin. 18 (2011) \#P225.

\bibitem{xu2011}
J. Xu, Metacirculant tournaments whose order is a product of two distinct
primes, Discrete Math. 311 (2011) 571-576.
\bibitem{xu2015}
J. Xu, Digraph representations of 2-closed permutation
groups with a normal regular cyclic subgroup, Electron. J. Combin. 22(4) (2015), \#P4.31
\bibitem{holt} http://mathoverflow.net/questions/235114/2-closure-of-a-permutation-group.
\end{thebibliography}
\end{document}